\definecolor{subsectioncolor}{RGB}{1,1,0}
\newtheorem{theorem}{Theorem}[section]
\newtheorem{proposition}[theorem]{Proposition}
\newtheorem{lemma}[theorem]{Lemma}
\newtheorem{definition}[theorem]{Definition}
\newtheorem{assumption}[theorem]{Assumption}
\theoremstyle{definition}
\newtheorem{remark}[theorem]{Remark}
\newtheorem{example}[theorem]{Example}
\newcommand{\longthmtitle}[1]{\mbox{}{\textit{(#1):}}}
\newcommand{\Vast}{\bBigg@{4.5}}
\newcommand{\rowrank}{\textnormal{rowrk }}
\newcommand{\oprocendsymbol}{\hbox{$\square$}}
\newcommand{\oprocend}{\relax\ifmmode\else\unskip\hfill\fi\oprocendsymbol}
\newcommand{\algorithmicgiven}{\textbf{Given}}
\renewcommand{\algorithmicrequire}{\textbf{Input:}}
\renewcommand{\algorithmicensure}{\textbf{Output:}}
\def\footnoterule{\kern-3\p@
  \hrule \@width 2in \kern 2.6\p@} 
\def\BibTeX{{\rm B\kern-.05em{\sc i\kern-.025em b}\kern-.08em
    T\kern-.1667em\lower.7ex\hbox{E}\kern-.125emX}}
\title{\bf \LARGE Data-Driven Optimal Control of Bilinear Systems}
\author{Zhenyi Yuan and Jorge Cort\'es
\thanks{The authors are with the Department of Mechanical and Aerospace Engineering, UC San
    Diego, {\tt \{z7yuan,cortes\}@ucsd.edu}. Work  supported by AFOSR
     FA9550-19-1-0235 and NSF 2044900.}}
\begin{document}

\pagestyle{empty}
\maketitle
\thispagestyle{FirstPage}

\begin{abstract}
  This paper develops a method to learn optimal controls from data for
  bilinear systems without a priori knowledge of the dynamics. Given
  an unknown bilinear system, we characterize when the available data
  is sufficiently informative to solve the optimal control
  problem. This characterization leads us to propose an online control
  experiment design procedure that guarantees that any input/state
  trajectory can be represented as a linear combination of collected
  input/state data matrices. Leveraging this representation, we
  transform the original optimal control problem into an equivalent
  data-based optimization problem with bilinear constraints. We solve
  the latter by iteratively employing a convex-concave procedure to
  find a locally optimal control sequence. Simulations show that the
  performance of the proposed data-based approach is comparable with
  model-based methods.
\end{abstract}

\begin{IEEEkeywords}
  Data-driven  control, biliner systems
\end{IEEEkeywords}

\vspace{-2ex}
\section{Introduction}\label{sec:intro}

\IEEEPARstart{T}{he} widespread availability of data, together with increasing
computational capabilities to store, process, and manipulate it, has
boosted the research activity in learning, modeling, and control of
dynamical phenomena across science and engineering. Data-driven
control has emerged as an appealing way of leveraging this data surge
by employing solid theoretical principles to design controllers that
do not require explicit a priori knowledge of the plant to be
controlled. This paper contributes to this body of work by studying
the data-driven synthesis of optimal control laws for bilinear
systems.

\emph{Literature Review.} Data-driven control approaches include
indirect and direct methods~\cite{ZH-ZW:13}. Indirect methods identify
system models from data prior to proceeding to the synthesis of
model-based controllers, while direct approaches bypass the
intermediate modeling step and construct controllers directly from
data. A diverse range of factors, including the complexity of the
plant, the cost and practicality of performing system identification,
and the amount and quality of the available data, play a key role in
the suitability and performance of each of these approaches, see
e.g.,~\cite{VK-FP:21,FD-JC-IM:21}. The direct data-driven approach has
been particularly fruitful for linear systems, where tools from
behavioral theory~\cite{JCW-PR-IM-BLMDM:05} have allowed to express
the system trajectories in terms of sufficiently-rich data. This has
resulted in the synthesis of feedback stabilizing
controllers~\cite{CDP-PT:19,HJVW-JE-HLT-MKC:20}, optimal control
laws~\cite{GRGBDS-SA-CL-LC:18,GB-VK-FP:19,GB-FP:20}, predictive
controllers~\cite{JC-JL-FD:19,AA-JC:21-csl}, network
controllers~\cite{AA-JC:21-csl,GB-DSB-FP:21}, control experiment
design~\cite{HJVW:21}, optimization-based
controllers~\cite{GB-MV-JC-EDA:21-tac}, and extensions to various
types of nonlinear systems~\cite{AB-CDP-PT:20,CDP-PT:21,JB-FA:20},
including flat \cite{MA-JB-VGL-FA-MAM:21},
  second-order~\cite{JGRE-JS:20}, and linear time-varying
  systems~\cite{CV-RT-SH-AK:21}. Here we focus on direct
data-driven control of bilinear systems as a building block for future
work on more complex nonlinear systems. These systems are often viewed
as the bridge between linear and nonlinear systems due to their
special properties~\cite{CB-GD-GK:74}.
Moreover, \cite{DG-DAP:21} shows that control-affine nonlinear systems
can be exactly bilinearized. The recent work~\cite{AB-CDP-PT:20}
proposes a local stabilizing data-driven controller design for
bilinear systems. Here, we focus on the synthesis of optimal
controllers.  Model-based approaches to optimal control of bilinear
systems include~\cite{ZA-ZG:94,SW-JL:17,SW-JL:18}, which treat them as
time-varying linear systems and solve the optimization problem by
applying iteratively the Pontryagin’s maximum principle, and
\cite{YZ-JC:17-tcns}, which gives a lower bound on the minimum control
energy required to steer the bilinear system using the reachability
Gramian.

\emph{Statement of Contributions.} We consider discrete-time bilinear
control systems and study the point-to-point optimal control problem
over a finite time horizon. We assume the system matrices are unknown
and seek to learn the optimal control from input/state data. We
introduce the notion of $T$-persistently exciting data to characterize
when it is sufficiently informative for reconstructing the optimal
control over the time horizon $T$. Under this hypothesis, we show that
any input/state trajectory can be represented as a linear combination
of the collected input/state data. Owing to the nonlinear nature of
bilinear systems, the problem of ensuring that data is
$T$-persistently exciting requires us to introduce an online control
experiment design. We show our design is guaranteed to yield
$T$-persistently exciting data in a finite number of steps. Building
on this, we pose the optimal control synthesis problem as a data-based
optimization with bilinear constraints. We show that a local solution
to this nonconvex problem can be found by iteratively solving the
convexified problems that result from applying a convex-concave
approximation procedure.
Simulations show similar performance between the proposed data-based
approach and model-based methods.

\section{Problem Formulation}\label{sec:2}

Consider\footnote{We denote by
  $\mathbb{R}$, $\mathbb{Z}_{\ge 0}$, and $\mathbb{Z}_{> 0}$ the sets
  of real, non-negative integer, and positive integer numbers,
  resp. Let $I$, and $\mathbf{0}$ and $\mathbf{1}$ denote the identity
  matrix, and zero and all-ones vector/matrix, resp. Given
  $f: \mathbb{Z}_{\ge 0} \to \mathbb{R}^d$ and
  $i,j \in \mathbb{Z}_{\ge 0}$, $i \leq j$, $f_{[i,j]}$ is the
  restriction of $f$ to $[i, j]$ in vector form, i.e.,
  $f_{[i,j]} = [ f(i)^\top \ f(i+1)^\top \ \cdots \ f(j)^\top]^\top$,
  and $f_{\{i,j\}}$ the sequence $\{f(i),\dots,f(j)\}$. For
  $\mathbf{X} = [\mathbf{x}_1^\top \ \mathbf{x}_2^\top \ \cdots \
  \mathbf{x}_j^\top]^\top \in \mathbb{R}^{ij}$ with
  $\mathbf{x}_1,\cdots,\mathbf{x}_j \in \mathbb{R}^i$,
  $\mathcal{H}_k(\mathbf{X})$ denotes the Hankel matrix of depth
  $k \in \mathbb{Z}_{> 0}$, with $k \leq j$,
\begin{align*}
    \mathcal{H}_k(\mathbf{X}) := \left[ \begin{matrix} \mathbf{x}_1 & \mathbf{x}_2 & \cdots & \mathbf{x}_{j-k+1} \\
        \mathbf{x}_2 & \mathbf{x}_3 & \cdots & \mathbf{x}_{j-k+2} \\
    \vdots & \vdots & \ddots & \vdots \\ 
    \mathbf{x}_k & \mathbf{x}_{k+1} & \cdots & \mathbf{x}_j\\ \end{matrix} \right] \in \mathbb{R}^{ik \times (j-k+1)}.
\end{align*}
Given matrices $\mathbf{Y}$ and $\mathbf{Z}$,
$[\mathbf{Y} \ \mathbf{Z}]$ and $[\mathbf{Y}; \mathbf{Z}]$ denote
their row- and column-concatenations, resp. We use $\mathbf{Z}^\dag$
and $\operatorname{Im} \mathbf{Z}$ to represent the pseudo-inverse and
image space of $\mathbf{Z}$, resp. Finally, $\otimes$ denotes the
Kronecker product, while $\left\| \cdot \right\|$ represents the
Euclidean norm.}  the discrete-time bilinear control system
\begin{align}\label{system}
  \mathbf{x}(t+1) = \mathbf{A}\mathbf{x}(t) + \mathbf{B}\mathbf{u}(t) + \Big[ \sum_{j=1}^{n} \mathbf{x}_j(t) \mathbf{N}_j \Big] \mathbf{u}(t),
\end{align}
where $\mathbf{x}(t) \in \mathbb{R}^n$ and $\mathbf{u}(t) \in \mathbb{R}^m$ are the system state and input, respectively, and $\mathbf{A} \in \mathbb{R}^{n \times n}$, $\mathbf{B} \in \mathbb{R}^{n \times m}$ and $\mathbf{N}_j \in \mathbb{R}^{n \times m}, j = 1,\dots,n$ are system matrices. Denoting $\mathbf{N} = [\mathbf{N}_1 \ \mathbf{N}_2 \ \cdots \ \mathbf{N}_n] \in \mathbb{R}^{n \times mn}$, the dynamics~\eqref{system} is
\begin{align}\label{system2}
    \mathbf{x}(t+1) = \mathbf{A}\mathbf{x}(t) + \mathbf{B}\mathbf{u}(t) + \mathbf{N} (\mathbf{x}(t)\otimes \mathbf{u}(t)).
\end{align}
We make the following assumption.

\begin{assumption}\label{ass:controllable-pair}
The pair $(\mathbf{A}, [\mathbf{B} \ \mathbf{N}])$ is controllable.
\end{assumption}

Note that Assumption~\ref{ass:controllable-pair}
is weaker than asking for the bilinear system~\eqref{system2} to be controllable. Given initial $\mathbf{x}_0$ and target $\mathbf{x_f}$ states, we consider the following  (point-to-point) optimal control problem over the time horizon~$T$,
\begin{align}\label{problem}
  \min_{\mathbf{u}_{[0,T-1]}} \quad &\sum_{t=0}^{T-1} \mathbf{x}^\top(t) \mathbf{Q} \mathbf{x}(t) + \mathbf{u}^\top(t) \mathbf{R} \mathbf{u}(t) \notag
    \\
     \text{s.t.} \quad  &\mathbf{x}(t+1) = \mathbf{A}\mathbf{x}(t) + \mathbf{B}\mathbf{u}(t) + \Big[ \sum_{j=1}^{n}\mathbf{x}_j(t) \mathbf{N}_j \Big] \mathbf{u}(t), \notag
    \\
     &\mathbf{x}(0) =  \mathbf{x_0}, \ \mathbf{x}(T) = \mathbf{x_f}. \tag{P1}
\end{align}
Here, $\mathbf{Q} \in \mathbb{R}^{n \times n}$,
$\mathbf{R} \in \mathbb{R}^{m \times m}$ are positive
semi-definite. The minimum-energy control problem corresponds to
$\mathbf{Q} = \mathbf{0}$ and $\mathbf{R} = I$. This optimization is
nonconvex and its closed-form solution is not known in general. The
optimality conditions of \eqref{problem} lead to a nonlinear two-point
boundary-value problem, for which there is no analytical solution
available~\cite{YZ-JC:17-tcns}.

We address the following problem: assume the system matrices
$\mathbf{A}$, $\mathbf{B}$ and $\mathbf{N}_j, j=1,\dots,n$ are unknown
and, instead, we have access to input/state data of a control
experiment of~\eqref{system2}, that is, a control input sequence
$\mathbf{u}_{\{0,L-1\}}$ along with the corresponding state sequence
$\mathbf{x}_{\{0,L\}}$ of~\eqref{system2}.  Our objective is to
develop an algorithmic procedure to learn from the data the optimal
control sequence $\mathbf{u}^\star_{\{0,T-1\}}$ that
solves~\eqref{problem}.

\section{$T$-Persistently Exciting Data for Optimal
    Control of Bilinear
    Systems}\label{sec:data-suitability} 
In this section, we characterize when the available data is sufficient
to solve the optimal control problem and discuss a procedure to design
the control experiment. To motivate our discussion, we start by
considering the linear system
\begin{align}\label{eq:linear-system}
  \mathbf{x}(t+1) = \mathbf{A}\mathbf{x}(t) + \mathbf{B}\mathbf{u}(t) 
\end{align}
(corresponding to $\mathbf{N} = \mathbf{0}$ in~\eqref{system2}). Let
$\mathbf{x}_{\{0,L\}}$ be a state sequence generated
by~\eqref{eq:linear-system} with input sequence
$\mathbf{u}_{\{0,L-1\}}$. According to Willems' fundamental
lemma~\cite{JCW-PR-IM-BLMDM:05,HJVW-CDP-MKC-PT:20}, and assuming the
pair $(\mathbf{A}, \mathbf{B})$ is controllable, if
$\mathbf{u}_{\{0,L-1\}}$ is persistently
exciting\footnote{$f_{\{0,L-1\}}$ is persist. exciting of order
  $k$ if $\mathcal{H}_k(f_{[0,L-1]})$ is full-row rank.} of order
$n+T$, then
$\tilde{\mathcal{G}}_T(L) := [ \mathbf{x}_{[0,L-T]};
\mathcal{H}_T(\mathbf{u}_{[0,L - 1]}) ] \in \mathbb{R}^{(n+mT) \times
  (L-T+1)}$ is full-row rank. This ensures that for any input/state
trajectory ($\bar{\mathbf{u}}_{[0, T-1]},\bar{\mathbf{x}}_{[0, T-1]}$)
of length $T$ of the linear system~\eqref{eq:linear-system}, there
exist some $\tilde{\alpha} \in \mathbb{R}^{L-T+1}$ such that
\begin{align*}
         \left[\begin{array}{l}
         \bar{\mathbf{x}}_{[0, T-1]} \\ \hline
            \bar{\mathbf{u}}_{[0, T-1]}         
     \end{array}\right] = \left[\begin{array}{l}
            \mathcal{H}_{T}(\mathbf{x}_{[0, L-1]}) \\ \hline
            \mathcal{H}_{T}(\mathbf{u}_{[0, L-1]})
                \end{array}\right] \tilde{\alpha}.
\end{align*}
Now consider the original bilinear system~\eqref{system2}. If we
regard $\mathbf{x}(t) \otimes \mathbf{u}(t)$ as an independent input,
then the dynamics corresponds to a linear system with input matrix
$\left[\mathbf{B} \ \mathbf{N}\right]$ and control input
$\mathbf{v}(t) = [ \mathbf{u}(t); \mathbf{x}(t) \otimes \mathbf{u}(t)
]$. Willems' fundamental lemma applied to this linear system implies
that, under  Assumption~\ref{ass:controllable-pair}, if
$\mathbf{v}_{\{0,L-1\}}$ is persistently exciting of order $n+T$, then
$[ \mathbf{x}_{[0,L-T]}; \mathcal{H}_T(\mathbf{v}_{[0,L - 1]}) ]$ is
full-row rank, which then ensures that any input/state trajectory
($\bar{\mathbf{v}}_{[0, T-1]},\bar{\mathbf{x}}_{[0, T-1]}$) of length
$T$ of system \eqref{system2} can be represented by
\begin{align}\label{eq:linear-combination}
    \left[\begin{array}{l}
     \bar{\mathbf{x}}_{[0, T-1]} \\ \hline
    \bar{\mathbf{v}}_{[0, T-1]} 
    \end{array}\right] = \left[\begin{array}{l}
    \mathcal{H}_{T}(\mathbf{x}_{[0, L-1]}) \\ \hline
    \mathcal{H}_{T}(\mathbf{v}_{[0, L-1]})
    \end{array}\right] \alpha,
\end{align}
for some $\alpha \in \mathbb{R}^{L-T+1}$. However, as we know, the input $\mathbf{v}$ is not independent, and ensuring it is persistently exciting is not guaranteed by simply asking for $\mathbf{u}$ to be so. These observations motivate our ensuing definitions and technical treatment.

\begin{remark}\longthmtitle{Fundamental lemma for nonlinear systems}
    Our exposition above uses the fundamental lemma in the context of
    bilinear systems by interpreting the bilinear term as an input.
    Recent literature on data-driven control has pursued similar ideas
    for different classes of nonlinear systems by relying on linear
    expressions in lifted coordinates, e.g., Hammerstein and
    Wiener~\cite{JB-FA:20}, linear
    parameter-varying~\cite{CV-RT-SH-AK:21}, second-order
    Volterra~\cite{JGRE-JS:20} and flat~\cite{MA-JB-VGL-FA-MAM:21}
    systems. \hfill $\rhd$
\end{remark}

\vspace*{-3ex}
\subsection{Parametrization of state trajectories}
We next introduce the notion of $T$-persistently exciting data.

\begin{figure*}[htb]
\begin{align}\label{eq:transition}
    \mathcal{H}_T(\mathbf{x}_{[1,L]}) =  \Vast[ \underbrace{ \begin{array}{c} \mathbf{A} \\ \mathbf{A}^2 \\ \vdots \\ \mathbf{A}^T \end{array}}_{\mathcal{O}_T} \Vast| \underbrace{\begin{array}{cccc}
        \mathbf{B} & \mathbf{0} & \cdots & \mathbf{0} \\
        \mathbf{AB} & \mathbf{B} &  \cdots & \mathbf{0} \\
        \vdots & \vdots & \ddots & \vdots \\
        \mathbf{A}^{T-1}\mathbf{B} & \mathbf{A}^{T-2}\mathbf{B} & \cdots & \mathbf{B} 
    \end{array}}_{\mathcal{P}_T} \Vast| \underbrace{\begin{array}{cccc}
        \mathbf{N} & \mathbf{0} & \cdots{} & \mathbf{0} \\
        \mathbf{AN} & \mathbf{N} & \cdots & \mathbf{0}\\
        \vdots & \vdots & \ddots & \vdots \\
        \mathbf{A}^{T-1}\mathbf{N} & \mathbf{A}^{T-2}\mathbf{N} & \cdots & \mathbf{N} 
    \end{array} }_{\mathcal{Q}_T} \Vast] \mathcal{G}_T(L)
\end{align}
\vspace*{-2ex}
\end{figure*}

\begin{definition}\longthmtitle{$T$-persistently exciting
      data for optimal control of bilinear systems}\label{def}
  Let $\mathbf{x}_{\{0,L\}}$ be a state sequence generated by
  \eqref{system2} with input sequence $\mathbf{u}_{\{0,L-1\}}$.  The
  data $\mathbf{x}_{\{0,L\}}$, $\mathbf{u}_{\{0,L-1\}}$ is
  \emph{$T$-persistently exciting} if
\begin{align*}
	\mathcal{G}_T(L) \!:=\! \left[ \begin{array}{c}
     \mathcal{H}_1(\mathbf{x}_{[0,L-T]})  \\ \hline
     \mathcal{H}_T(\mathbf{u}_{[0,L-1]}) \\ \hline
     \mathcal{H}_T(\mathbf{x \!\otimes\! u}_{[0,L-1]})
\end{array}\right] \in \mathbb{R}^{(n+mT+mnT) \times (L-T+1)}.
\end{align*}
is full-row rank.
\end{definition}

This definition requires as a necessary condition that
$L \geq (mn + m + 1)T + n - 1$. We point out that $\mathcal{G}_T(L)$
being full-row rank is equivalent to
$\left[ \mathcal{H}_1(\mathbf{x}_{[0,L-T]});
  \mathcal{H}_T(\mathbf{v}_{[0,L - 1]}) \right]$ being full-row rank,
as both matrices can be obtained from each other by row
permutation. Using~\eqref{system2}, one can obtain the
relation~\eqref{eq:transition}, where
$\mathcal{O}_T \in \mathbb{R}^{nT \times n}$,
$\mathcal{P}_T \in \mathbb{R}^{nT \times mT}$,
$\mathcal{Q}_T \in \mathbb{R}^{nT \times mnT}$. If $\mathcal{G}_T(L)$
is full-row rank, it immediately follows that
\begin{align*}
  \left[ \mathcal{O}_T \ \mathcal{P}_T \ \mathcal{Q}_T \right] = \mathcal{H}_T(\mathbf{x}_{[1,L]}) \mathcal{G}_T(L)^\dag.
\end{align*}

\begin{remark}\longthmtitle{$T$-persistently exciting data for optimal control versus for identification and stabilization}\label{rmk:identifystabilize}
  When $T = 1$, we have
  $\left[ \mathcal{O}_1 \ \mathcal{P}_1 \ \mathcal{Q}_1 \right]=\left[
    \mathbf{A} \ \mathbf{B} \ \mathbf{N}
  \right]$. Therefore, $1$-persistently exciting data
    corresponds to the standard notion of persistently exciting data,
    describing data needed for system
    identification~\cite{CDP-PT:19,HJVW-JE-HLT-MKC:20}.  Moreover, if
  $\mathcal{H}_1(\mathbf{x}_{[0,L-1]})$ is full-row rank, then under
  knowledge of an upper bound on $\| \mathbf{N} \|$, one can construct
  locally stabilizing controllers directly from data,
  cf.~\cite{AB-CDP-PT:20}. Notice $\mathcal{G}_T(L)$ is full-row rank
  $\Rightarrow \mathcal{G}_1(L)$ is full-row rank
  $\Rightarrow \mathcal{H}_1(\mathbf{x}_{[0,L-1]})$ is full-row
  rank. We deduce that $T$-persistently exciting data
  comprises data needed for system identification and local
  stabilization. Note that, although $\mathcal{O}_T, \mathcal{P}_T$
  and $\mathcal{Q}_T$ can be~constructed only using
  $\mathbf{A}, \mathbf{B}$ and $\mathbf{N}$ when $\mathcal{G}_1(L)$ is
  full-row rank, this is not enough to express any input/state
  trajectory of length $T$ as a linear combination of the collected
  input/state data, and thus $\mathcal{G}_1(L)$ being full-row rank is
  not sufficient to recover optimal controls. Another observation is
  that, in the linear case ($\mathbf{N} = \mathbf{0}$), according
  to~\cite{GB-FP:20}, optimal controls over the time horizon $T$ can
  be learned if $\tilde{\mathcal{G}}_T(L)$ is full-row rank.  One can
  identify and globally stabilize linear systems directly using data
  if $\tilde{\mathcal{G}}_1(L)$ is full-row rank,
  cf.~\cite{CDP-PT:19}.  Since $\tilde{\mathcal{G}}_T(L)$ is full-row
  rank $\Rightarrow \tilde{\mathcal{G}}_1(L)$ is full-row rank, this
  reinforces the parallelism between the bilinear and linear cases
  regarding data conditions for different control problems. \hfill
  $\rhd$
\end{remark}

We have established that any input/state trajectory of~\eqref{system2}
admits a data-based representation of the
form~\eqref{eq:linear-combination}. The next result establishes when
the converse is also true, i.e., when a trajectory of the
form~\eqref{eq:linear-combination} corresponds to a trajectory
of~\eqref{system2}.

\begin{lemma}\longthmtitle{Data-based representation of input/state
    trajectory in terms of $T$-persistently exciting
    data}\label{lemma:data-based-presentation}
Let  $\mathbf{x}_{\{0,L\}}$ and $\mathbf{u}_{\{0,L-1\}}$ be a $T$-persistently exciting data set. Then
\begin{enumerate}
\item Any input/state trajectory
  ($\bar{\mathbf{u}}_{[0, T-1]},\bar{\mathbf{x}}_{[0, T]}$) of
  system~\eqref{system2} can be represented as
\begin{align*}
\left[ \begin{array}{c} \bar{\mathbf{x}}_{[0, T]} \\ \hline \bar{\mathbf{u}}_{[0, T-1]} \end{array} \right] = \left[ \begin{array}{c} \mathcal{H}_{T+1}(\mathbf{x}_{[0,L]}) \\ \hline \mathcal{H}_{T}(\mathbf{u}_{[0, L-1]}) \end{array} \right] \alpha
\end{align*}
for some  $\alpha \in \mathbb{R}^{L-T+1}$;
\item Conversely, let  $\alpha \in \mathbb{R}^{L-T+1}$ such that
\begin{align}\label{constraint-alpha}
    \bar{\mathbf{x}} \otimes \bar{\mathbf{u}}_{[0,T-1]} = \mathcal{H}_T(\mathbf{x \otimes u}_{[0,L-1]}) \alpha,
\end{align}
where $\bar{\mathbf{x}}_{[0,T-1]} = \mathcal{H}_{T}(\mathbf{x}_{[0,L-1]})  \alpha$ and $\bar{\mathbf{u}}_{[0,T-1]} = \mathcal{H}_T(\mathbf{u}_{[0,L-1]}) \alpha$. Then, 
  $\left[\mathcal{H}_{T+1}(\mathbf{x}_{[0,L]});\mathcal{H}_{T}(\mathbf{u}_{[0, L-1]}) \right] \alpha$ is an input/state trajectory of~\eqref{system2} over the time horizon~$T$.
\end{enumerate}
\end{lemma}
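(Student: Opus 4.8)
The plan is to build everything on the structural identity~\eqref{eq:transition}, which encodes the convolution (reachability) structure of the bilinear dynamics by tying the one-step-ahead state Hankel matrix $\mathcal{H}_T(\mathbf{x}_{[1,L]})$ to $\mathcal{G}_T(L)$ through the block matrices $\mathcal{O}_T,\mathcal{P}_T,\mathcal{Q}_T$. Throughout I will use that stacking $\mathcal{H}_1(\mathbf{x}_{[0,L-T]})$ on top of $\mathcal{H}_T(\mathbf{x}_{[1,L]})$ recovers $\mathcal{H}_{T+1}(\mathbf{x}_{[0,L]})$, so the claimed representation of $\bar{\mathbf{x}}_{[0,T]}$ splits into an initial-state block and a propagated block that I treat separately.

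For part (i), I would first regard $\bar{\mathbf{v}}(t):=[\bar{\mathbf{u}}(t);\bar{\mathbf{x}}(t)\otimes\bar{\mathbf{u}}(t)]$ as a lifted input, so that $(\bar{\mathbf{v}}_{[0,T-1]},\bar{\mathbf{x}}_{[0,T]})$ is a trajectory of the linear system with input matrix $[\mathbf{B}\ \mathbf{N}]$. Since $\mathcal{G}_T(L)$ is full-row rank (equivalently $[\mathcal{H}_1(\mathbf{x}_{[0,L-T]});\mathcal{H}_T(\mathbf{v}_{[0,L-1]})]$ is), there is an $\alpha$ with $\bar{\mathbf{x}}(0)=\mathcal{H}_1(\mathbf{x}_{[0,L-T]})\alpha$ and $\bar{\mathbf{v}}_{[0,T-1]}=\mathcal{H}_T(\mathbf{v}_{[0,L-1]})\alpha$; equivalently $\mathcal{G}_T(L)\alpha=[\bar{\mathbf{x}}(0);\bar{\mathbf{u}}_{[0,T-1]};\bar{\mathbf{x}}\otimes\bar{\mathbf{u}}_{[0,T-1]}]$. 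Right-multiplying~\eqref{eq:transition} by this $\alpha$ and reading off the $t$-th block row shows that the $t$-th block of $\mathcal{H}_T(\mathbf{x}_{[1,L]})\alpha$ is $\mathbf{A}^t\bar{\mathbf{x}}(0)+\sum_{k=0}^{t-1}\mathbf{A}^{t-1-k}(\mathbf{B}\bar{\mathbf{u}}(k)+\mathbf{N}(\bar{\mathbf{x}}(k)\otimes\bar{\mathbf{u}}(k)))$, which equals $\bar{\mathbf{x}}(t)$ because the trajectory obeys the bilinear recursion. Together with the initial-state block this gives $\bar{\mathbf{x}}_{[0,T]}=\mathcal{H}_{T+1}(\mathbf{x}_{[0,L]})\alpha$, proving (i).

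For part (ii), I would set $\bar{\mathbf{x}}_{[0,T]}:=\mathcal{H}_{T+1}(\mathbf{x}_{[0,L]})\alpha$ and $\bar{\mathbf{u}}_{[0,T-1]}:=\mathcal{H}_T(\mathbf{u}_{[0,L-1]})\alpha$. The first $T$ state blocks agree with $\mathcal{H}_T(\mathbf{x}_{[0,L-1]})\alpha$, and by the imposed constraint~\eqref{constraint-alpha} we have $\mathcal{H}_T(\mathbf{x \otimes u}_{[0,L-1]})\alpha=\bar{\mathbf{x}}\otimes\bar{\mathbf{u}}_{[0,T-1]}$, so $\mathcal{G}_T(L)\alpha=[\bar{\mathbf{x}}(0);\bar{\mathbf{u}}_{[0,T-1]};\bar{\mathbf{x}}\otimes\bar{\mathbf{u}}_{[0,T-1]}]$ exactly as in (i). Applying~\eqref{eq:transition} then expresses each $\bar{\mathbf{x}}(t)$, $t=1,\dots,T$, in the convolution form above, and the telescoping difference $\bar{\mathbf{x}}(t+1)-\mathbf{A}\bar{\mathbf{x}}(t)$ collapses the sums to the single term $\mathbf{B}\bar{\mathbf{u}}(t)+\mathbf{N}(\bar{\mathbf{x}}(t)\otimes\bar{\mathbf{u}}(t))$, recovering the one-step bilinear recursion for every $t=0,\dots,T-1$ and certifying that the constructed pair is a genuine trajectory.

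The crux in both directions is the Kronecker nonlinearity: $\bar{\mathbf{x}}(t)\otimes\bar{\mathbf{u}}(t)$ is quadratic in $\alpha$, whereas $\mathcal{H}_T(\mathbf{x \otimes u}_{[0,L-1]})\alpha$ is linear in $\alpha$. In (i) the two coincide automatically, since $(\bar{\mathbf{u}},\bar{\mathbf{x}})$ is an actual trajectory and the lifted input is internally consistent. In (ii) no such consistency is free, and the explicit constraint~\eqref{constraint-alpha} is precisely what forces $\mathcal{H}_T(\mathbf{x \otimes u}_{[0,L-1]})\alpha$ to equal the true bilinear term; the main obstacle is verifying that this single algebraic condition suffices to upgrade a data-based linear combination into a bona fide nonlinear trajectory, which is resolved cleanly by~\eqref{eq:transition} together with the telescoping argument.
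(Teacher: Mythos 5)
Your proposal is correct and follows essentially the same route as the paper: both parts hinge on the surjectivity of $\mathcal{G}_T(L)$ to produce $\alpha$ matching the initial state, input, and bilinear term, and then invoke the structural identity~\eqref{eq:transition} to propagate the state blocks, with constraint~\eqref{constraint-alpha} supplying the consistency of the Kronecker term in the converse direction. Your version merely spells out the convolution/telescoping step that the paper leaves implicit.
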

\begin{proof}
  For \emph{(i)}, note that any input/state trajectory
  $(\bar{\mathbf{x}}_{[0,T]},\bar{\mathbf{u}}_{[0,T-1]})$
  of~\eqref{system2} is uniquely determined by
  $\bar{\mathbf{x}}(0) \in \operatorname{Im}
  \mathcal{H}_1(\mathbf{x}_{[0,L-T]})$ and
  $\bar{\mathbf{u}}_{[0,T-1]} \in \operatorname{Im}
  \mathcal{H}_{T}(\mathbf{u}_{[0, L-1]})$. Recalling that
  $\mathcal{G}_T(L)$ is full-row rank, \emph{(i)} follows.  For
  \emph{(ii)}, let $\alpha$ satisfy \eqref{constraint-alpha} and
  consider the initial state
  $\bar{\mathbf{x}}(0) = \mathcal{H}_1(\mathbf{x}_{[0,L-T]}) \alpha$
  and input sequence
  $\bar{\mathbf{u}}_{[0,T-1]} = \mathcal{H}_{T}(\mathbf{u}_{[0, L-1]})
  \alpha$. Then,
\begin{align*}
    \bar{\mathbf{x}}_{[1,T]} &= \left[ \mathcal{O}_T \ \mathcal{P}_T \ \mathcal{Q}_T \right] \left[ \begin{array}{c}
    \bar{\mathbf{x}}(0) \\ \hline
    \mathcal{H}_T(\bar{\mathbf{x}} \otimes \bar{\mathbf{u}}_{[0,T-1]}) \\ \hline
    \bar{\mathbf{u}}_{[0,T-1]} \end{array}\right] \\
    &= \left[ \mathcal{O}_T \ \mathcal{P}_T \ \mathcal{Q}_T \right] \mathcal{G}_T(L) \alpha = \mathcal{H}_T(\mathbf{x}_{[1,L]}) \alpha,
\end{align*}
where we have employed \eqref{eq:transition}. The conclusion follows by noting $\mathcal{H}_{T+1}(\mathbf{x}_{[0,L]}) = [\mathcal{H}_1(\mathbf{x}_{[0,L-T]});\mathcal{H}_T(\mathbf{x}_{[1,L]})]$. 
\end{proof}

\subsection{Online experiment for $T$-persistence of excitation}

We discuss next how to ensure that the available data is
$T$-persistently exciting. Based on our discussion above,
$\mathbf{v}_{\{0,L-1\}}$ being persistently exciting of order $n+T$ is
enough to ensure the $T$-persistence of excitation of the data for
bilinear systems. In contrast to the linear case, where
$\mathbf{u}_{\{0,L-1\}}$ can be designed to be persistently exciting
of any order by selecting control inputs offline, the persistence of
excitation of $\mathbf{v}_{\{0,L-1\}}$ depends on both the control
input $\mathbf{u}(t)$ and the system state $\mathbf{x}(t)$. Due to the
unknown nonlinear dynamics, there is no available closed-form
expression of $\mathbf{x}(t)$ in terms of $\mathbf{u}(t)$. Hence,
selecting control inputs offline may not guarantee
$\mathbf{v}_{\{0,L-1\}}$ to be persistently exciting of order $n+T$,
which motivates an online approach to design $\mathbf{u}$.
To tackle this, we draw inspiration
  from~\cite{CDP-PT:21,HJVW:21} to propose an experiment design
  approach for bilinear systems that yields
  $T$-persistently exciting data. We start with some
useful facts.

\begin{proposition}\longthmtitle{Scaled persistently exciting input
    returns a full-row rank Hankel matrix of state
    data}\label{pro:scaled}
  Consider system~\eqref{system2} and further assume that the pair
  $(\mathbf{A},\mathbf{B})$ is controllable. Then, for any input
  sequence $\mathbf{u}_{\{0,L-1\}}$ that is persistently exciting of
  order $n+k$, there exists $\bar{\varepsilon}$ such that for all
  $\varepsilon \in (0,\bar{\varepsilon})$, the input sequence
  $\varepsilon \mathbf{u}_{\{0,L-1\}}$ with initial state
  $\mathbf{x}(0) = \mathbf{0}$ ensures
  $\mathcal{H}_k(\mathbf{x}_{[1,L]})$ is full-row rank.
\end{proposition}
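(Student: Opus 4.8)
The plan is to exploit the smallness of the scaled input to expand the state as a power series in $\varepsilon$, and then to argue that full row rank, being a generic property of an analytic matrix family, persists on an interval $(0,\bar\varepsilon)$. With $\mathbf{x}(0)=\mathbf{0}$ and input $\varepsilon\mathbf{u}(t)$, a direct induction on~\eqref{system2} shows that each $\mathbf{x}(t)$ is a polynomial in $\varepsilon$ with no constant term, $\mathbf{x}(t)=\varepsilon\,\mathbf{x}^{(1)}(t)+\varepsilon^{2}\mathbf{x}^{(2)}(t)+\cdots$, where $\mathbf{x}^{(1)}$ solves the linear recursion $\mathbf{x}^{(1)}(t+1)=\mathbf{A}\mathbf{x}^{(1)}(t)+\mathbf{B}\mathbf{u}(t)$, $\mathbf{x}^{(1)}(0)=\mathbf{0}$, and the bilinear term $\mathbf{N}(\mathbf{x}\otimes\mathbf{u})=O(\varepsilon^{2})$ first contributes at order $\varepsilon^{2}$. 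Hence $\mathcal{H}_k(\mathbf{x}_{[1,L]})=\varepsilon\,M(\varepsilon)$ with $M(\varepsilon)=\mathcal{H}_k(\mathbf{x}^{(1)}_{[1,L]})+\varepsilon\,\mathcal{H}_k(\mathbf{x}^{(2)}_{[1,L]})+\cdots$ having polynomial entries, and for $\varepsilon\neq0$ the matrix $\mathcal{H}_k(\mathbf{x}_{[1,L]})$ is full row rank iff $M(\varepsilon)$ is.

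Next I would reduce everything to a single generic-rank claim. Each $nk\times nk$ minor of $M(\varepsilon)$ is a polynomial in $\varepsilon$; if at least one of them is not identically zero, its finitely many roots are isolated, so there is $\bar\varepsilon>0$ on which that minor, and hence $M(\varepsilon)$, is nonsingular throughout $(0,\bar\varepsilon)$. It therefore suffices to prove that $M(\varepsilon)$ has full row rank $nk$ for generic $\varepsilon$, equivalently over the field $\mathbb{R}(\varepsilon)$.

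The heart of the matter is that the leading term $M(0)=\mathcal{H}_k(\mathbf{x}^{(1)}_{[1,L]})$ is itself rank deficient in general: the linear specialization of~\eqref{eq:transition} gives $\mathcal{H}_k(\mathbf{x}^{(1)}_{[1,L]})=[\mathcal{O}_k\ \mathcal{P}_k]\,[\mathcal{H}_1(\mathbf{x}^{(1)}_{[0,L-k]});\mathcal{H}_k(\mathbf{u}_{[0,L-1]})]$, and for $k\ge2$ the factor $[\mathcal{O}_k\ \mathcal{P}_k]$ is not full row rank unless $\mathbf{B}$ is surjective, since its column space is the space of length-$k$ state trajectories of $(\mathbf{A},\mathbf{B})$ from a free initial condition, of dimension $\dim(\operatorname{Im}\mathbf{A}+\operatorname{Im}\mathbf{B})+(k-1)\operatorname{rank}\mathbf{B}<nk$. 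Thus the deficiency directions of the linear part must be filled by the order-$\varepsilon$ correction $\mathcal{H}_k(\mathbf{x}^{(2)}_{[1,L]})$, which is precisely the term carrying the bilinear matrix $\mathbf{N}$. To make this rigorous I would take a putative left annihilator $\mathbf{z}(\varepsilon)=\mathbf{z}_0+\varepsilon\mathbf{z}_1+\cdots$, normalized so that $\mathbf{z}_0\neq\mathbf{0}$, with $\mathbf{z}(\varepsilon)^\top M(\varepsilon)=0$, and read off its coefficients order by order: at order $\varepsilon^0$, $\mathbf{z}_0^\top\mathcal{H}_k(\mathbf{x}^{(1)}_{[1,L]})=0$ pins $\mathbf{z}_0$ to the left null space of the linear term, while at order $\varepsilon^1$, $\mathbf{z}_0^\top\mathcal{H}_k(\mathbf{x}^{(2)}_{[1,L]})+\mathbf{z}_1^\top\mathcal{H}_k(\mathbf{x}^{(1)}_{[1,L]})=0$ couples $\mathbf{z}_0$ to the bilinear correction. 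The goal is to show that these, together with persistency of excitation of $\mathbf{u}$ of order $n+k$ and the extended controllability of $(\mathbf{A},[\mathbf{B}\ \mathbf{N}])$, force $\mathbf{z}_0=\mathbf{0}$, the desired contradiction.

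The main obstacle is exactly this last implication: controlling how the quadratic correction $\mathbf{x}^{(2)}$ populates the deficiency directions of the linear part, uniformly over all candidate $\mathbf{z}_0$. I expect to write $\mathbf{x}^{(2)}$ as the convolution of $\mathbf{N}(\mathbf{x}^{(1)}\otimes\mathbf{u})$ against the linear dynamics, and then to rule out nontrivial row dependencies by a persistency-of-excitation argument applied to the augmented input built from $\mathbf{u}$ and the bilinear monomials $\mathbf{x}^{(1)}\otimes\mathbf{u}$. One structural nondegeneracy is already forced by the factorization~\eqref{eq:transition}: since $\mathcal{H}_k(\mathbf{x}_{[1,L]})$ factors through $[\mathcal{O}_k\ \mathcal{P}_k\ \mathcal{Q}_k]$, whose rank is likewise $\dim(\operatorname{Im}\mathbf{A}+\operatorname{Im}[\mathbf{B}\ \mathbf{N}])+(k-1)\operatorname{rank}[\mathbf{B}\ \mathbf{N}]$, the conclusion cannot hold unless $[\mathbf{B}\ \mathbf{N}]$ has full row rank. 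I would record this as the governing nondegeneracy condition, verify it against the standing controllability hypotheses, and use it to guarantee that the linear and bilinear directions jointly span $\mathbb{R}^{nk}$.
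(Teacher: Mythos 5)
Your overall setup --- expanding the state as a power series in $\varepsilon$, writing $\mathcal{H}_k(\mathbf{x}_{[1,L]})=\varepsilon M(\varepsilon)$, and reducing the claim to a generic-rank statement for the polynomial family $M(\varepsilon)$ --- is consistent with what the paper intends: the authors omit the proof and assert that it follows by extending to $k\ge 1$ the small-input perturbation argument used in the cited reference for $k=1$, where one compares the bilinear trajectory with its linearization and invokes lower semicontinuity of rank. But your proposal is not a proof. The step that carries the entire burden --- showing that the order-$\varepsilon$ correction $\mathcal{H}_k(\mathbf{x}^{(2)}_{[1,L]})$, which is where $\mathbf{N}$ first enters, fills the left-null directions of the rank-deficient leading term $\mathcal{H}_k(\mathbf{x}^{(1)}_{[1,L]})$ --- is announced as a goal ("the goal is to show that these \dots force $\mathbf{z}_0=\mathbf{0}$") and never carried out; no mechanism is exhibited by which persistency of excitation of $\mathbf{u}$ of order $n+k$ would deliver it.

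More importantly, your own rank bookkeeping shows this step cannot be completed under the stated hypotheses, and you should record that as a finding rather than as a "nondegeneracy condition to be verified." By the factorization \eqref{eq:transition}, for every input and every $\varepsilon$ one has $\operatorname{rank}\mathcal{H}_k(\mathbf{x}_{[1,L]})\le\operatorname{rank}\left[\mathcal{O}_k\ \mathcal{P}_k\ \mathcal{Q}_k\right]=\operatorname{rank}\left[\mathbf{A}\ \mathbf{B}\ \mathbf{N}\right]+(k-1)\operatorname{rank}\left[\mathbf{B}\ \mathbf{N}\right]$, so full row rank $nk$ with $k\ge 2$ forces $\left[\mathbf{B}\ \mathbf{N}\right]$ to be surjective. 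That is implied neither by controllability of $(\mathbf{A},\mathbf{B})$ (the proposition's only hypothesis) nor by Assumption~\ref{ass:controllable-pair}: already the linear instance $\mathbf{N}=\mathbf{0}$ with a controllable single-input pair and $n\ge 2$ gives $\operatorname{rank}\mathcal{H}_2(\mathbf{x}_{[1,L]})\le n+1<2n$ for any experiment, and the same happens for genuinely bilinear systems with $\operatorname{Im}\mathbf{N}\subseteq\operatorname{Im}\mathbf{B}$. So the statement as written needs either the extra hypothesis $\operatorname{rank}\left[\mathbf{B}\ \mathbf{N}\right]=n$ (after which the generic-rank argument still has to be supplied --- mere lower semicontinuity at the linearization cannot give it, since the limit $M(0)$ is itself rank deficient) or a weaker conclusion. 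Note in this respect that the only place the paper invokes Proposition~\ref{pro:scaled} (the proof of Theorem~\ref{thm:experiments-design}) it uses only the last block row, i.e.\ that $\mathcal{H}_1(\mathbf{x}_{[t,\ell]})$ has rank $n$, which is a $k=1$-type conclusion and is recoverable by the perturbation argument you set up in your first paragraph.
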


We omit the proof for space reasons, but note that the result follows
by using for the higher-order case of $k \geq 1$ the same arguments
employed in~\cite{CDP-PT:21} for the case of~$k = 1$ 
  (note that the initial state $\mathbf{x}(0) = \mathbf{0}$ remains
  the same after scaling by $\varepsilon$). The next
  result generalizes~\cite[Thm.~2]{HJVW:21} to bilinear systems.

\begin{proposition}\longthmtitle{Property on the left kernel of
    $\mathcal{G}_T(t)$ when it is not full-row
    rank}\label{pro:left-kernel}
 Suppose $\mathcal{G}_T(t)$ is not full-row rank for some $t \geq T$. If
    \begin{align}\label{eq:verify}
        \left[ \begin{array}{c}
		\mathbf{x}(t-T+1) \\ \hline
		\mathbf{u}_{[t-T+1,t-1]} \\ \hline
		\mathbf{x \otimes u}_{[t-T+1,t-1]}
		\end{array}  \right] \in \operatorname{Im} \left[ \begin{array}{c}
		\mathcal{H}_1(\mathbf{x}_{[0,t-T]}) \\ \hline
		\mathcal{H}_{T-1}(\mathbf{u}_{[0,t-2]}) \\ \hline
		\mathcal{H}_{T-1}(\mathbf{x \otimes u}_{[0,t-2]})
		\end{array} \right] ,
	\end{align}
	then there must exist  $\xi \in \mathbb{R}^n$, $\eta_1, \dots, \eta_T \in \mathbb{R}^m$, and $\chi_1, \dots, \chi_T \in \mathbb{R}^{mn}$ such that the following holds
	\begin{align}\label{leftkernel}
	    \big[ \xi^\top \ \eta_1^\top \ \cdots \ \eta_T^\top \ \chi_1^\top \ \cdots \ \chi_T^\top \big] \mathcal{G}_T(t) = \mathbf{0},
	\end{align}
with at least one in $\{\eta_T,\chi_T\}$ not equal to $\mathbf{0}$.
\end{proposition}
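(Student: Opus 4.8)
The plan is to produce a left‑kernel vector of $\mathcal{G}_T(t)$ of the form~\eqref{leftkernel} with a nonzero last block by a \emph{lifting} argument that trades one Hankel ``depth'' for one application of the dynamics, with controllability (Assumption~\ref{ass:controllable-pair}) disposing of the single degenerate case. First I would fix notation: for $1\le d\le T$, let $\mathcal{G}^{(d)}$ be the submatrix of $\mathcal{G}_T(t)$ formed by the state block $\mathcal{H}_1(\mathbf{x}_{[0,t-T]})$ together with the first $d$ block‑rows of each of $\mathcal{H}_T(\mathbf{u}_{[0,t-1]})$ and $\mathcal{H}_T(\mathbf{x}\!\otimes\!\mathbf{u}_{[0,t-1]})$, so that $\mathcal{G}^{(T)}=\mathcal{G}_T(t)$ and $\mathcal{G}^{(T-1)}$ is exactly the matrix on the right of~\eqref{eq:verify}. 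A vector $(\xi,\eta_{1:d},\chi_{1:d})$ lies in the left kernel of $\mathcal{G}^{(d)}$ iff
\begin{align*}
\xi^\top \mathbf{x}(\tau) + \sum_{i=1}^{d}\eta_i^\top \mathbf{u}(\tau{+}i{-}1) + \sum_{i=1}^{d}\chi_i^\top (\mathbf{x}\otimes\mathbf{u})(\tau{+}i{-}1) = 0, \quad \tau=0,\dots,t-T.
\end{align*}
The key reinterpretation of the hypothesis is that the vector on the left of~\eqref{eq:verify} is precisely the column of ``$\mathcal{G}^{(T-1)}$ extended by one time step''; thus~\eqref{eq:verify}, i.e.\ membership in $\operatorname{Im}\mathcal{G}^{(T-1)}$, is equivalent to orthogonality to the left kernel of $\mathcal{G}^{(T-1)}$, i.e.\ every left‑kernel vector of $\mathcal{G}^{(T-1)}$ satisfies the displayed relation also at $\tau=t-T+1$. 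Appending zeros shows the same extended relation holds for left‑kernel vectors of every $\mathcal{G}^{(d)}$, $d\le T-1$.

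The heart of the argument is the lifting construction. Given a left‑kernel vector $(\xi,\eta_{1:d},\chi_{1:d})$ of $\mathcal{G}^{(d)}$, I would define $\xi'=\mathbf{A}^\top\xi$, $\eta_1'=\mathbf{B}^\top\xi$, $\chi_1'=\mathbf{N}^\top\xi$, and $\eta_{i+1}'=\eta_i$, $\chi_{i+1}'=\chi_i$ for $i=1,\dots,d$. Substituting the dynamics~\eqref{system2} as $\mathbf{x}(\tau{+}1)=\mathbf{A}\mathbf{x}(\tau)+\mathbf{B}\mathbf{u}(\tau)+\mathbf{N}(\mathbf{x}\otimes\mathbf{u})(\tau)$, the left‑kernel relation for the primed vector at column $\tau$ collapses exactly to the relation for the original vector at column $\tau{+}1$; since the latter now holds for $\tau=0,\dots,t-T+1$ (using the extended relation), the primed vector lies in the left kernel of $\mathcal{G}^{(d+1)}$, and crucially its last block $(\eta_{d+1}',\chi_{d+1}')=(\eta_d,\chi_d)$ is preserved. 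Hence, once I exhibit for some $d$ a left‑kernel vector of $\mathcal{G}^{(d)}$ whose last block $(\eta_d,\chi_d)$ is nonzero, iterating the construction $T-d$ times lifts it to a left‑kernel vector of $\mathcal{G}_T(t)$ with $(\eta_T,\chi_T)\ne\mathbf{0}$.

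It remains to secure such a starting vector. Since $\mathcal{G}_T(t)$ is rank‑deficient it has a nonzero left‑kernel vector; if, for every $d$, all left‑kernel vectors of $\mathcal{G}^{(d)}$ had a vanishing last block (the degenerate case), then discarding these zero blocks one level at a time would yield a nonzero $(\xi,\mathbf{0},\mathbf{0})$ in the left kernel of $\mathcal{G}^{(1)}$, so the left‑kernel space $V$ of $\mathcal{H}_1(\mathbf{x}_{[0,t-T]})$ would be nonzero. For any $\zeta\in V$, the extended relation applied to $(\zeta,\mathbf{0},\mathbf{0})$ gives $\zeta^\top\mathbf{x}(t-T+1)=0$, hence $\zeta^\top\mathbf{x}(\tau)=0$ for $\tau=0,\dots,t-T+1$; consequently $(\mathbf{A}^\top\zeta,\mathbf{B}^\top\zeta,\mathbf{N}^\top\zeta)$ is a left‑kernel vector of $\mathcal{G}^{(1)}$, since its relation at $\tau$ reads $\zeta^\top\mathbf{x}(\tau{+}1)=0$. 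The degeneracy assumption then forces $\mathbf{B}^\top\zeta=\mathbf{N}^\top\zeta=\mathbf{0}$ and $\mathbf{A}^\top\zeta\in V$, so $V$ is a nonzero $\mathbf{A}^\top$‑invariant subspace contained in the left kernel of $[\mathbf{B}\ \mathbf{N}]$ — impossible under controllability of $(\mathbf{A},[\mathbf{B}\ \mathbf{N}])$ (Assumption~\ref{ass:controllable-pair}). Thus the degenerate case cannot arise, a valid starting vector exists, and lifting completes the proof.

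I expect the two genuine obstacles to be: (i) discovering the dynamics‑aware shift ($\xi\mapsto\mathbf{A}^\top\xi$ with $\mathbf{B}^\top\xi,\mathbf{N}^\top\xi$ injected into the new first block) that makes the lifting step work despite the state appearing only at depth one; and (ii) recognizing that the sole remaining degenerate case is precisely an uncontrollable mode, so that Assumption~\ref{ass:controllable-pair} is exactly what rules it out.
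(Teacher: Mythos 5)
Your proof is correct and is essentially the paper's argument in contrapositive/constructive packaging: your dynamics-aware lift $\xi\mapsto(\mathbf{A}^\top\xi,\mathbf{B}^\top\xi,\mathbf{N}^\top\xi)$ is exactly the block identity the paper left-multiplies by, the use of \eqref{eq:verify} to extend the annihilation by one column is the same, and your invariant-subspace contradiction for $V$ is the orthogonal-complement dual of the paper's image-based controllability argument. The only difference is organization (a hierarchy of truncated matrices $\mathcal{G}^{(d)}$ with explicit lifting, versus a single contradiction on the full $\mathcal{G}_T(t)$ with iterated truncation), which does not change the substance.
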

\begin{proof}
We reason by contradiction. Suppose all vectors of the form $\left[ \xi^\top \ \eta_1^\top \ \cdots \ \eta_T^\top \ \chi_1^\top \ \cdots \ \chi_T^\top  \right]$ in the left kernel of $\mathcal{G}_T(t)$ satisfy that both $\eta_T$ and $\chi_T$ are equal to $\mathbf{0}$. Then,
\begin{align*}
	\left[ \xi^\top \ \eta_1^\top  \cdots  \eta_{T-1}^\top \ \chi_1^\top  \cdots  \chi_{T-1}^\top \right] \left[ \begin{array}{c}
\mathcal{H}_1(\mathbf{x}_{[0,t-T]}) \\ \hline
\mathcal{H}_{T-1}(\mathbf{u}_{[0,t-2]}) \\ \hline
\mathcal{H}_{T-1}(\mathbf{x \! \otimes \! u}_{[0,t-2]})
\end{array} \right] = \mathbf{0}.
\end{align*}
Combining this with~\eqref{eq:verify}, we deduce that
\begin{align*}
	\left[ \xi^\top \ \eta_1^\top \cdots \eta_{T-1}^\top \ \chi_1^\top \cdots \chi_{T-1}^\top \right] \left[\! \begin{array}{c}
		\mathcal{H}_1(\mathbf{x}_{[0,t-T+1]}) \\ \hline
		\mathcal{H}_{T-1}(\mathbf{u}_{[0,t-1]}) \\ \hline
		\mathcal{H}_{T-1}(\mathbf{x \otimes u}_{[0,t-1]})
		\end{array} \!\right]  = \mathbf{0}.
\end{align*}
Combining this with the fact that
\begin{align*}
	\left[ \begin{array}{c}
		\mathcal{H}_1(\mathbf{x}_{[1,t-T+1]}) \\ \hline
		\mathcal{H}_{T-1}(\mathbf{u}_{[1,t-1]}) \\ \hline
		\mathcal{H}_{T-1}(\mathbf{x \otimes u}_{[1,t-1]})
		\end{array} \right] = \left[ \begin{array}{ccccc}
		\mathbf{A} & \mathbf{B} & \mathbf{0} & \mathbf{N} & \mathbf{0} \\
		\mathbf{0} & \mathbf{0} & I & \mathbf{0} & \mathbf{0} \\
		\mathbf{0} & \mathbf{0} & \mathbf{0} & \mathbf{0} & I
		\end{array} \right] \mathcal{G}_T(t),
\end{align*}
we obtain
\begin{align*}
\left[ \xi^\top \mathbf{A} \ \xi^\top \mathbf{B} \ \eta_1^\top \ \cdots \ \eta_{T-1}^\top \ \xi^\top \mathbf{N} \ \chi_1^\top \ \cdots \ \chi_{T-1}^\top \right] \mathcal{G}_T(t) = \mathbf{0}.
\end{align*}
Consequently, given our hypothesis of contradiction, $\eta_{T-1}$ and $\chi_{T-1}$ must both be equal to $\mathbf{0}$. Following a similar procedure iteratively, we conclude that $\eta_{T-1}=\cdots=\eta_1=\mathbf{0}$ and $\chi_{T-1}=\cdots=\chi_1=\mathbf{0}$.
This implies that $\operatorname{Im} \mathcal{G}_T(t) = \operatorname{Im} \mathcal{H}_1(\mathbf{x}_{[0,t-T]}) \times \mathbb{R}^{(m+mn)T}$.
Left multiplying by $\left[ \mathbf{A} \ \mathbf{B} \ \mathbf{0} \ \mathbf{N} \ \mathbf{0} \right]$ on both sides, we obtain $\mathbf{A} \operatorname{Im} \mathcal{H}_1(\mathbf{x}_{[0,t-T]}) + \operatorname{Im} \mathbf{B} + \operatorname{Im} \mathbf{N} = \operatorname{Im} \mathcal{H}_1(\mathbf{x}_{[1,t-T+1]})$.
Since $\mathbf{x}(t-T+1) \in \operatorname{Im} \mathcal{H}_1(\mathbf{x}_{[0,t-T]})$, then $\mathbf{A} \operatorname{Im} \mathcal{H}_1(\mathbf{x}_{[0,t-T]}) + \operatorname{Im} \mathbf{B} + \operatorname{Im} \mathbf{N} = \operatorname{Im} \mathcal{H}_1(\mathbf{x}_{[0,t-T]})$.
This implies $\operatorname{Im} \mathcal{H}_1(\mathbf{x}_{[0,t-T]})$ is an $\mathbf{A}$-invariant subspace containing $\operatorname{Im} \left[ \mathbf{B} \ \mathbf{N} \right]$. Since the reachable subspace of the pair $(\mathbf{A},\left[ \mathbf{B} \ \mathbf{N} \right])$ is $\mathbb{R}^n$ by Assumption~\ref{ass:controllable-pair}, and the fact that it is also the smallest $\mathbf{A}$-invariant subspace containing $\operatorname{Im} \left[ \mathbf{B} \ \mathbf{N} \right]$,  we deduce that $\mathbb{R}^n \subseteq \operatorname{Im} \mathcal{H}_1(\mathbf{x}_{[0,t-T]})$.
Therefore $\mathbb{R}^{(m + mn)T + n} \subseteq \operatorname{Im} \mathbf{x}_{[0,t-T]} \times \mathbb{R}^{(m + mn)T} = \operatorname{Im} \mathcal{G}_T(t)$, which contradicts the fact that $\mathcal{G}_T(t)$ is not full-row rank.
\end{proof}

Based on Propositions~\ref{pro:scaled} and~\ref{pro:left-kernel}, now
we introduce the online control experiment procedure in
Algorithm~\ref{alg:experiments} to ensure the data is
$T$-persistently exciting.
The underlying idea of the strategy is to increase the row rank of $\mathcal{G}_T(t)$ at each step.

\begin{algorithm}[ht]
  \caption{Online control experiment design}\label{alg:experiments}
  \begin{algorithmic}[1]
  \State \algorithmicrequire ~$\mathbf{x}(0) = \mathbf{0}$, $\|\mathbf{u}(i)\| < \epsilon$ for $i = 0,\dots,T-1$ s.t. $\mathcal{G}_T(T) \neq \mathbf{0}$, $\epsilon$ sufficient close to $0$, $t := T$, $k := 1$
  \Repeat
  \While{$\mathcal{H}_{n+k}(\mathbf{u}_{[0,t-1]})$ is full-row rank}
  \State $k \gets k+1$
  \Comment{Increase order}
  \EndWhile
  \If{\eqref{eq:verify} holds}
  \State select $\xi \in \mathbb{R}^n$, 
    $\eta = \left[ \eta_1^\top \dots \eta_T^\top \right]^\top\in \mathbb{R}^{mT}$, and 
    $\chi = \left[ \chi_1^\top \dots \chi_T^\top \right]^\top \in \mathbb{R}^{mnT}$ s.t.~\eqref{leftkernel} holds, with $\left[\eta_T^\top \ \chi_T^\top\right] \neq \mathbf{0}$
  \If{ $\eta_T^\top + \chi_{T}^\top(\mathbf{x}(t) \otimes I) \neq \mathbf{0}$}
  \State choose $\|\mathbf{u}(t)\| < \epsilon$ s.t. $\xi^\top \mathbf{x}(t - T + 1) + \eta^\top \mathbf{u}_{[t - T + 1,t]} + \chi^\top \mathbf{x \otimes u}_{[t - T + 1,t]} \neq  \mathbf{0}$ holds
  \Else 
  \State choose $\|\mathbf{u}(t)\| < \epsilon$ s.t. $\rowrank (\mathcal{H}_{n + k}( \mathbf{u}_{[0,t]} ))$ increases
  \EndIf
  \Else
  \State choose $\|\mathbf{u}(t)\| < \epsilon$ arbitrarily
  \EndIf
 \State $t \gets t+1$
 \Comment{Update iteration}
 \Until $\mathcal{G}_T(t)$ is full-row rank
 \State $L \gets t$
 \State \algorithmicensure ~ Full-row rank $\mathcal{G}_T(L)$
\end{algorithmic}
\end{algorithm}

\begin{theorem}\longthmtitle{Online control experiment design for $T$-persistently exciting data}\label{thm:experiments-design}
Let $(\mathbf{A},\mathbf{B})$ be controllable and design the control experiment for system~\eqref{system2} according to Algorithm~\ref{alg:experiments}. Then the output $\mathcal{G}_T(L)$ is full-row rank.
\end{theorem}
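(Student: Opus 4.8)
The plan is to reduce the claim to a termination statement. The loop's \textbf{until} test is precisely ``$\mathcal{G}_T(t)$ is full-row rank,'' so correctness on exit is automatic; the entire content is to show Algorithm~\ref{alg:experiments} halts. The backbone is the observation that $\mathcal{G}_T(t+1)=[\mathcal{G}_T(t)\mid c]$ with $c=[\mathbf{x}(t-T+1);\mathbf{u}_{[t-T+1,t]};\mathbf{x \otimes u}_{[t-T+1,t]}]$, since every Hankel block of $\mathcal{G}_T(t+1)$ is the corresponding block of $\mathcal{G}_T(t)$ with one appended column. Hence $\operatorname{rk}\mathcal{G}_T(t)$ is nondecreasing and capped by $n+mT+mnT$, so it suffices to show the rank cannot stall below this cap. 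The chosen input $\mathbf{u}(t)$ enters $c$ only through the last input block and the last Kronecker block $\mathbf{x}(t)\otimes\mathbf{u}(t)$.

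I would first dispatch the two branches in which the rank provably increases. When \eqref{eq:verify} fails, I note that the matrix on its right is exactly $\mathcal{G}_T(t)$ with its last input and last Kronecker block-rows removed—write it as $M=R\,\mathcal{G}_T(t)$ for a row-selection $R$—and that its left-hand vector is $Rc$, which does not involve $\mathbf{u}(t)$. Then $Rc\notin\operatorname{Im}M$ rules out $c=\mathcal{G}_T(t)\beta$ for any $\beta$ (apply $R$), so $c\notin\operatorname{Im}\mathcal{G}_T(t)$ for every $\mathbf{u}(t)$ and the rank strictly increases; this is why an arbitrary small input is admissible here. When \eqref{eq:verify} holds and $\eta_T^\top+\chi_T^\top(\mathbf{x}(t)\otimes I)\neq\mathbf{0}$, Proposition~\ref{pro:left-kernel} supplies a left-kernel vector $w=[\xi;\eta_1;\dots;\eta_T;\chi_1;\dots;\chi_T]$ of $\mathcal{G}_T(t)$, and a direct computation gives $w^\top c=(\text{terms free of }\mathbf{u}(t))+[\eta_T^\top+\chi_T^\top(\mathbf{x}(t)\otimes I)]\mathbf{u}(t)$. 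This is a nonconstant affine function of $\mathbf{u}(t)$, so its zero set is a hyperplane and an arbitrarily small $\mathbf{u}(t)$ off it yields $w^\top c\neq0$, i.e. $c\notin\operatorname{Im}\mathcal{G}_T(t)$, again a strict increase.

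The hard part will be the remaining branch—\eqref{eq:verify} holds but $\eta_T^\top+\chi_T^\top(\mathbf{x}(t)\otimes I)=\mathbf{0}$—where $w^\top c$ is independent of $\mathbf{u}(t)$, so $\operatorname{rk}\mathcal{G}_T$ may stall while the algorithm instead raises $\operatorname{rk}\mathcal{H}_{n+k}(\mathbf{u}_{[0,t]})$. I would argue by contradiction: suppose $\operatorname{rk}\mathcal{G}_T(t)$ never reaches its cap. After finitely many steps the rank freezes, and from then on every iteration must fall in this stalling branch (the other two strictly increase the rank). Each such step strictly increases $\operatorname{rk}\mathcal{H}_{n+k}(\mathbf{u}_{[0,t]})$; since for fixed $k$ this rank is bounded by $m(n+k)$, the input becomes persistently exciting of order $n+k$, the while-loop advances $k$, and thus $k\to\infty$. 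Because the inputs are kept below $\epsilon$ and $\mathbf{x}(0)=\mathbf{0}$, Proposition~\ref{pro:scaled} then forces the collected state data to become full-row rank. I would finally show that this unbounded input excitation together with full-rank state data is incompatible with a permanent deficiency of $\mathcal{G}_T(t)$—intuitively, it drives the augmented signal $\mathbf{v}=[\mathbf{u};\mathbf{x}\otimes\mathbf{u}]$ toward persistence of excitation of order $n+T$, which by the discussion preceding Definition~\ref{def} makes $\mathcal{G}_T$ full-row rank—contradicting the stall.

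The crux, and the step I expect to be most delicate, is exactly this last incompatibility: rigorously translating the growing persistence of excitation of $\mathbf{u}$ and the resulting state richness (via Proposition~\ref{pro:scaled} and the controllability of $(\mathbf{A},[\mathbf{B}\ \mathbf{N}])$ in Assumption~\ref{ass:controllable-pair}) into the exclusion of an infinite run of the stalling branch. Two technical points need care within it: that a rank-increasing admissible $\mathbf{u}(t)$ of norm below $\epsilon$ indeed exists in that branch (a Hankel experiment-design fact of the type in~\cite{HJVW:21}), and that the single fixed threshold $\epsilon$ remains compatible with Proposition~\ref{pro:scaled} as the required excitation order grows. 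Once this is in place, monotonicity and the finite cap $n+mT+mnT$ yield termination with a full-row rank $\mathcal{G}_T(L)$.
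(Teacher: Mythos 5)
Your reduction to termination, the column-appending identity $\mathcal{G}_T(t+1)=[\mathcal{G}_T(t)\ \ c]$ with the rank cap $n+mT+mnT$, and your handling of the first two branches (the case where \eqref{eq:verify} fails, and the case where it holds with $\eta_T^\top+\chi_T^\top(\mathbf{x}(t)\otimes I)\neq\mathbf{0}$) all match the paper's argument. The genuine gap is exactly the step you flag as ``the crux'' and leave as an intention: ruling out an infinite run of the stalling branch. Your proposed route --- that ever-higher persistence of excitation of $\mathbf{u}$ together with rich state data ``drives $\mathbf{v}=[\mathbf{u};\mathbf{x}\otimes\mathbf{u}]$ toward persistence of excitation of order $n+T$'' --- is not established, and it is arguably the wrong target: the paper explicitly notes that persistence of excitation of $\mathbf{u}$ does not transfer to $\mathbf{v}$, and that obstruction is the very reason the online design exists. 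Proving PE of $\mathbf{v}$ in the limit would essentially re-prove the theorem by other means, so as stated this is circular rather than a shortcut.

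The paper closes the stalling branch with a local argument you are missing. Suppose the stall begins at time $t$ and persists, i.e., $\eta_T^\top+\chi_T^\top(\mathbf{x}(\ell)\otimes I)=\mathbf{0}$ for all $\ell\ge t$ for the left-kernel witness $[\xi;\eta_1;\dots;\eta_T;\chi_1;\dots;\chi_T]$ of Proposition~\ref{pro:left-kernel} with $[\eta_T;\chi_T]\neq\mathbf{0}$. Since the map $\mathbf{x}\mapsto\eta_T^\top+\chi_T^\top(\mathbf{x}\otimes I)$ is affine, this forces
\begin{align*}
\eta_T^\top+\chi_T^\top\bigl(\mathcal{H}_1(\mathbf{x}_{[t,\ell]})\,\alpha\otimes I\bigr)=\mathbf{0}
\quad\text{for every }\alpha\text{ with }\mathbf{1}^\top\alpha=1.
\end{align*}
Meanwhile, the stalling branch keeps increasing $k$ and restoring full row rank of $\mathcal{H}_{n+k}(\mathbf{u}_{[0,\ell]})$; once $k>t$, the input is persistently exciting of order $n+t$, and Proposition~\ref{pro:scaled} applied at the \emph{fixed} order $t$ (so the scaling threshold does not drift with $k$, which answers your worry about the single $\epsilon$) yields that $\mathcal{H}_t(\mathbf{x}_{[1,\ell]})$, and hence $\mathcal{H}_1(\mathbf{x}_{[t,\ell]})$, is full-row rank. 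A nonzero $[\eta_T;\chi_T]$ cannot make the displayed expression vanish for all normalized $\alpha$ over a full-row-rank $\mathcal{H}_1(\mathbf{x}_{[t,\ell]})$, so some visited state $\mathbf{x}(\ell)$ satisfies $\eta_T^\top+\chi_T^\top(\mathbf{x}(\ell)\otimes I)\neq\mathbf{0}$; at that step the algorithm re-enters your second branch and the rank of $\mathcal{G}_T$ strictly increases, contradicting the stall. Without this (or an equivalent) argument, your proof does not terminate the stalling case, so the proposal as written is incomplete at its decisive step.
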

\begin{proof}
  Given $t \ge T$, assume $\mathcal{G}_T(t)$ is not full-row rank. If
  \eqref{eq:verify} does not hold, it is easy to see that any choice
  of $\mathbf{u}(t)$ leads to
  $\rowrank (\mathcal{G}_T(t+1)) > \rowrank
  (\mathcal{G}_T(t))$. Hence, we concentrate on the case when
  \eqref{eq:verify} holds. In this case, from
  Proposition~\ref{pro:left-kernel}, we know there exist
  $\xi \in \mathbb{R}^n$, $\eta_1, \dots, \eta_T \in \mathbb{R}^m$,
  and $\chi_1, \dots, \chi_T \in \mathbb{R}^{mn}$, with at least one
  in $\{\eta_T,\chi_T\}$ not equal to $\mathbf{0}$
  making~\eqref{leftkernel} hold. We aim to design $\mathbf{u}(t)$ to
  satisfy
  $\xi^\top \mathbf{x}(t - T + 1) + \eta^\top \mathbf{u}_{[t - T +
    1,t]} + \chi^\top \mathbf{x \otimes u}_{[t - T + 1,t]} \neq
  \mathbf{0}$ so that
  $[\mathbf{x}(t - T + 1);\mathbf{u}_{[t - T + 1,t]};\mathbf{x \otimes
    u}_{[t - T + 1,t]}]$ does not belong to
  $\operatorname{Im} \mathcal{G}_T(t)$, which ensures
  $\rowrank (\mathcal{G}_T(t+1)) > \rowrank (\mathcal{G}_T(t))$.  Such
  $u(t)$ can be found as long as
  $\eta_T^\top + \chi_{T}^\top(\mathbf{x}(t) \otimes I) \neq
  \mathbf{0}$. If this is not the case, any selection of
  $\mathbf{u}(t)$ will not affect whether the row rank of
  $\mathcal{G}_T(t)$ will increase or not at this time step. We prove
  by contradiction that this situation will not occur indefinitely
  under Algorithm~\ref{alg:experiments}. Suppose
  $\eta_T^\top + \chi_{T}^\top(\mathbf{x}(\ell) \otimes I) =
  \mathbf{0}$ holds for all $\ell \geq t$, it then follows that
  $\eta_T^\top + \chi_{T}^\top(\mathcal{H}_1(\mathbf{x}_{[t,\ell]})
  \alpha \otimes I) = \mathbf{0}$ for any
  $\alpha \in \mathbb{R}^{\ell-t+1}$ with
  $\mathbf{1}^\top \alpha = 1$. According to
  Algorithm~\ref{alg:experiments}, the order~$k$ is increased,
  followed by an input selection that makes
  $\mathcal{H}_{n+k}(\mathbf{u}_{[0,\ell]})$ full-row rank.  Let
  $\ell$ sufficiently large so that $k > t$. In this case,
  $\mathcal{H}_{n+t}(\mathbf{u}_{[0,\ell]})$ is full-row rank and,
  using Proposition~\ref{pro:scaled},
  $\mathcal{H}_t(\mathbf{x}_{[1,\ell]})$ is full-row rank too. The
  latter implies that $\mathcal{H}_1(\mathbf{x}_{[t,\ell]})$ is
  full-row rank. Since at least one in
  $\{\eta_T,\chi_T\}$ is not equal to $\mathbf{0}$, there must exist
  $\alpha \in \mathbb{R}^{\ell-t+1}$ with $\mathbf{1}^\top \alpha = 1$
  such that
  $\eta_T^\top + \chi_{T}^\top(\mathcal{H}_1(\mathbf{x}_{[t,\ell]})
  \alpha \otimes I) \neq \mathbf{0}$ holds, which is a
  contradiction. This shows that
  Algorithm~\ref{alg:experiments} increases the row rank of
  $\mathcal{G}_T(t)$ by one after finitely many steps, and hence, it
  eventually terminates with a full-row rank~$\mathcal{G}_T(L)$.
\end{proof}

Note that the controllability assumption on the pair
$(\mathbf{A},\mathbf{B})$ is only necessary to ensure
Algorithm~\ref{alg:experiments} is successful,
cf. Theorem~\ref{thm:experiments-design}. Our design methodology below
is still valid as long as a full row-rank matrix $\mathcal{G}_T(L)$
can be obtained.

\section{Data-driven control design}\label{sec:data-driven-design}

Here, we describe an algorithmic procedure to find a local solution of
the optimal control problem~\eqref{problem} using
$T$-persistently exciting data. Our first step is to
provide an equivalent data-based representation of the
optimization. We then iteratively apply a convex-concave procedure to
solve it efficiently. The next result provides a data-based
formulation of~\eqref{problem}, provided the available data is
$T$-persistently exciting.

\begin{theorem}\longthmtitle{Data-based reformulation of optimal control problem}\label{thm:equivalent}
  Given a $T$-persistently exciting data set
  $\mathbf{x}_{\{0,L\}}$ and $\mathbf{u}_{\{0,L-1\}}$,~\eqref{problem}
  is equivalent to the data-based optimization:
\begin{align}\label{new-problem}
   \min_{\alpha} \quad & \sum_{t=0}^{T-1} \bar{\mathbf{x}}^\top(t) \mathbf{Q} \bar{\mathbf{x}}(t) + \bar{\mathbf{u}}^\top(t) \mathbf{R} \bar{\mathbf{u}}(t) \notag \\
   {\rm s.t.} \ &
    \left[ \begin{array}{c} \bar{\mathbf{x}}_{[0, T]} \\ \hline \bar{\mathbf{u}}_{[0, T-1]} \end{array} \right] = \left[ \begin{array}{c} \mathcal{H}_{T+1}(\mathbf{x}_{[0,L]}) \\ \hline \mathcal{H}_{T}(\mathbf{u}_{[0, L-1]}) \end{array} \right] \alpha,
   \notag
   \\
& \ \bar{\mathbf{x}}(0) = \mathbf{x_0}, \bar{\mathbf{x}}(T) = \mathbf{x_f}, {\rm \eqref{constraint-alpha} \ holds}.
   \tag{P2}
\end{align}
\end{theorem}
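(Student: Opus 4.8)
The plan is to prove equivalence by exhibiting a cost-preserving correspondence between the feasible set of~\eqref{problem} and that of~\eqref{new-problem}, so that the two problems share the same optimal value and an optimizer of one yields an optimizer of the other. The two directions of this correspondence are exactly the two parts of Lemma~\ref{lemma:data-based-presentation}, and the objective functions are already written in the same form, so once the feasible sets are matched with $\bar{\mathbf{x}}(t) = \mathbf{x}(t)$ and $\bar{\mathbf{u}}(t) = \mathbf{u}(t)$, the cost equality is immediate.

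For the forward direction, I would start from an arbitrary feasible point of~\eqref{problem}, i.e., an input sequence $\mathbf{u}_{[0,T-1]}$ whose induced state trajectory $\mathbf{x}_{[0,T]}$ of~\eqref{system2} satisfies $\mathbf{x}(0) = \mathbf{x_0}$ and $\mathbf{x}(T) = \mathbf{x_f}$. Since the data is $T$-persistently exciting, $\mathcal{G}_T(L)$ is full-row rank, hence its image is all of $\mathbb{R}^{n+mT+mnT}$, so there exists $\alpha \in \mathbb{R}^{L-T+1}$ with $\mathcal{G}_T(L)\alpha = [\mathbf{x}(0); \mathbf{u}_{[0,T-1]}; \mathbf{x}\otimes\mathbf{u}_{[0,T-1]}]$. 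Reading off the three blocks of $\mathcal{G}_T(L)$ shows that this $\alpha$ reproduces $\bar{\mathbf{x}}(0) = \mathbf{x}(0)$ and $\bar{\mathbf{u}}_{[0,T-1]} = \mathbf{u}_{[0,T-1]}$ and, crucially, satisfies~\eqref{constraint-alpha}, because its third block is forced to equal the genuine Kronecker-product sequence $\mathbf{x}\otimes\mathbf{u}_{[0,T-1]}$. Feeding this $\alpha$ into~\eqref{eq:transition} gives $\mathcal{H}_T(\mathbf{x}_{[1,L]})\alpha = \bar{\mathbf{x}}_{[1,T]}$, which coincides with the forward propagation of the genuine dynamics, so the full reconstructed state $\mathcal{H}_{T+1}(\mathbf{x}_{[0,L]})\alpha$ equals $\mathbf{x}_{[0,T]}$; in particular the boundary conditions $\bar{\mathbf{x}}(0) = \mathbf{x_0}$ and $\bar{\mathbf{x}}(T) = \mathbf{x_f}$ hold, so $\alpha$ is feasible for~\eqref{new-problem} with identical objective value.

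For the converse direction, I would take any feasible $\alpha$ of~\eqref{new-problem}, so that $\alpha$ satisfies~\eqref{constraint-alpha} together with the boundary conditions. Part~(ii) of Lemma~\ref{lemma:data-based-presentation} then guarantees that $[\mathcal{H}_{T+1}(\mathbf{x}_{[0,L]})\alpha; \mathcal{H}_{T}(\mathbf{u}_{[0,L-1]})\alpha]$ is a genuine input/state trajectory of~\eqref{system2}. Its input component $\bar{\mathbf{u}}_{[0,T-1]}$ is therefore feasible for~\eqref{problem}, since the boundary conditions transfer directly to the genuine state, and the objective value is identical by construction. Combining the two directions, the feasible sets are in cost-preserving correspondence, so~\eqref{problem} and~\eqref{new-problem} have equal optimal value and an optimal $\alpha^\star$ of~\eqref{new-problem} returns the optimal control $\bar{\mathbf{u}}_{[0,T-1]} = \mathcal{H}_{T}(\mathbf{u}_{[0,L-1]})\alpha^\star$ of~\eqref{problem}.

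I expect the delicate step to be verifying, in the forward direction, that the single $\alpha$ reproducing the genuine trajectory simultaneously satisfies~\eqref{constraint-alpha}. This is exactly where the full-row rank of $\mathcal{G}_T(L)$, including the bilinear block $\mathcal{H}_T(\mathbf{x}\otimes\mathbf{u}_{[0,L-1]})$, is indispensable: it lets us match not only $\bar{\mathbf{x}}(0)$ and $\bar{\mathbf{u}}_{[0,T-1]}$ but also the nonlinear term, which is what distinguishes the bilinear setting from the linear one. Once this matching is secured, both inclusions reduce to direct applications of the two parts of Lemma~\ref{lemma:data-based-presentation}, and no further computation is needed.
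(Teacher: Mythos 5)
Your proposal is correct and follows essentially the same route as the paper, whose proof is simply the remark that the result ``readily follows'' from Lemma~\ref{lemma:data-based-presentation}: you apply part~(i) for the forward inclusion and part~(ii) for the converse, with the cost identity immediate once $\bar{\mathbf{x}},\bar{\mathbf{u}}$ are matched to the genuine trajectory. You go slightly beyond the paper by explicitly verifying the one step the lemma's statement does not literally cover --- that in the forward direction the $\alpha$ obtained from the surjectivity of $\mathcal{G}_T(L)$ can be chosen so that its third block reproduces the true Kronecker sequence and hence satisfies~\eqref{constraint-alpha} --- which is exactly the right detail to pin down.
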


The proof of this result readily follows from Lemma~\ref{lemma:data-based-presentation}.
Notice that the optimization problem~\eqref{new-problem} is nonconvex because of the presence of bilinear terms $\alpha_i \alpha_j$, $i,j \in \{1,\dots,L-T+1\}$, in the constraints. Here, we describe a convex–concave procedure from~\cite{TL-SB:16} that can be iteratively employed to solve it.
We first describe the bilinear terms with new variables $r_{i,j} = \alpha_i \alpha_j$, which we employ in the constraints in \eqref{new-problem} to make them all become affine. We represent this set of constraints by $\mathcal{A}_1(\alpha,r)=\mathbf{0}$. Additionally, we write each equality $r_{i,j} = \alpha_i \alpha_j$ with the following equivalent representation
\begin{align*}
& (\alpha_i + \alpha_j)^2 - (\alpha_i^2 + \alpha_j^2) - 2 r_{i,j} \leq 0 ,
 \\
& (\alpha_i^2 + \alpha_j^2) - (\alpha_i + \alpha_j)^2 + 2 r_{i,j} \leq 0 .   
\end{align*}
We gather all these new nonconvex constraints in the expression
$\mathcal{C}_1(\alpha) - \mathcal{C}_2(\alpha) + \mathcal{A}_2(r) \leq
0$, where $\{\mathcal{C}_i\}_{i=1}^2$, and $\mathcal{A}_2$ are
vector-valued convex and affine functions, resp.  Using
$\mathcal{C}_0$ to denote the convex cost
function,~\eqref{new-problem} reads
\begin{align}\label{new-problem-convex}
   \min_{\alpha, r} \quad & \mathcal{C}_0(\alpha) \notag \\
   {\rm s.t.} \quad & \mathcal{C}_1(\alpha) - \mathcal{C}_2(\alpha) + \mathcal{A}_2(r) \leq \mathbf{0},  \tag{P3} \\  
   & \mathcal{A}_1(\alpha,r) = \mathbf{0}.  \notag
\end{align}
The inequality in~\eqref{new-problem-convex} can be convexified
linearizing the concave function $- \mathcal{C}_2$. We perform such
convexification iteratively to yield Algorithm~\ref{alg:ccp},
which has non-polynomial time
  complexity~\cite{TL-SB:16}. At each iteration, the algorithm solves
  a convex quadratically constrained quadratic program, with
  complexity~\cite{SM-JS:91} $O(\sqrt{M}(M+N)N^2)$ ($M$ constraints
  and~$N$ variables). Here, $M=(L-T)^2+L+T+3$ and
  $N=\frac{(L-T+1)(L-T+2)}{2}$. The next result follows
from~\cite[Sec.~1.3]{TL-SB:16}.

\begin{lemma}\longthmtitle{Convergence to critical point of~\eqref{new-problem}}
  Given a feasible initial point $\alpha^0$, all iterates of Algorithm
  \ref{alg:ccp} are feasible,
  $\{\mathcal{C}_0(\alpha^{k})\}_{k=1}^\infty$ decreases
  monotonically, and $\{\alpha^{k}\}_{k=1}^\infty$ converges to a
  critical point $\alpha^\star$ of~\eqref{new-problem}.
\end{lemma}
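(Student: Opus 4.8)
The plan is to recognize Algorithm~\ref{alg:ccp} as an instance of the penalty convex–concave procedure of~\cite{TL-SB:16} and to verify that the structural hypotheses of~\cite[Sec.~1.3]{TL-SB:16} hold for~\eqref{new-problem-convex}, so that its convergence guarantees transfer directly. Concretely, at iterate $\alpha^k$ the algorithm replaces the concave term $-\mathcal{C}_2$ in the inequality constraint of~\eqref{new-problem-convex} by its first-order Taylor expansion $-\hat{\mathcal{C}}_2(\alpha;\alpha^k) := -\mathcal{C}_2(\alpha^k) - \nabla \mathcal{C}_2(\alpha^k)^\top (\alpha - \alpha^k)$, and solves the resulting convex program $\min_{\alpha,r} \mathcal{C}_0(\alpha)$ subject to $\mathcal{C}_1(\alpha) - \hat{\mathcal{C}}_2(\alpha;\alpha^k) + \mathcal{A}_2(r) \le \mathbf{0}$ and $\mathcal{A}_1(\alpha,r) = \mathbf{0}$ to obtain $(\alpha^{k+1},r^{k+1})$. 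The analytic fact I would use throughout is that, since $\mathcal{C}_2$ is convex, its linearization is a global under-estimator: $\mathcal{C}_2(\alpha) \ge \hat{\mathcal{C}}_2(\alpha;\alpha^k)$ for all $\alpha$, with equality at $\alpha=\alpha^k$.

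I would establish feasibility by induction, taking as hypothesis that $(\alpha^k,r^k)$ is feasible for~\eqref{new-problem-convex} with $r^k_{i,j}=\alpha^k_i\alpha^k_j$. The base case is the assumed feasibility of $\alpha^0$. For the inductive step, the under-estimator property shows that the convexified inequality is a tightening of the original one, so any $(\alpha^{k+1},r^{k+1})$ feasible for the $k$-th subproblem is also feasible for~\eqref{new-problem-convex}; moreover the subproblem is itself feasible because, by the equality case at $\alpha=\alpha^k$, the convexified inequality evaluated at $(\alpha^k,r^k)$ reduces to the original inequality, so $(\alpha^k,r^k)$ satisfies its constraints and a minimizer exists. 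Monotone descent then follows immediately: since $(\alpha^k,r^k)$ is feasible for the $k$-th subproblem, $(\alpha^{k+1},r^{k+1})$ is its minimizer, and $\mathcal{C}_0$ depends only on $\alpha$, we get $\mathcal{C}_0(\alpha^{k+1}) \le \mathcal{C}_0(\alpha^k)$. As $\mathcal{C}_0$ is a sum of positive-semidefinite quadratic forms it is bounded below by $0$, so $\{\mathcal{C}_0(\alpha^k)\}$ converges.

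The delicate part is the convergence of the iterates to a critical point of~\eqref{new-problem}, which I would obtain by invoking~\cite[Sec.~1.3]{TL-SB:16} after checking its hypotheses. The crucial identity is that the linearization preserves first-order information at the point of expansion, $\nabla_\alpha[\mathcal{C}_1(\alpha) - \hat{\mathcal{C}}_2(\alpha;\alpha^\star)]\big|_{\alpha=\alpha^\star} = \nabla\mathcal{C}_1(\alpha^\star) - \nabla\mathcal{C}_2(\alpha^\star)$, so that at any fixed point $\alpha^\star$ of the iteration the KKT stationarity and complementarity conditions of the $\alpha^\star$-subproblem coincide with those of~\eqref{new-problem-convex}, and hence of~\eqref{new-problem}. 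I would combine this with the facts that the constraints are defined by continuous functions, that a constraint qualification (e.g.\ Slater's condition on each convex subproblem) holds, and that the sublevel set $\{\alpha : \mathcal{C}_0(\alpha) \le \mathcal{C}_0(\alpha^0)\}$ intersected with the feasible set is bounded, to ensure that the sequence $\{\alpha^k\}$ has limit points and that every such point is critical. The main obstacle is precisely verifying that the regularity and constraint-qualification conditions required by~\cite{TL-SB:16} hold for the particular affine-plus-quadratic structure of~\eqref{new-problem-convex} and ruling out a non-critical accumulation point; once these are in place, monotone descent together with the matching-KKT identity yields the stated convergence to a critical point $\alpha^\star$ of~\eqref{new-problem}.
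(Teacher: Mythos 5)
Your proposal is correct and follows essentially the same route as the paper, which simply invokes the convex--concave procedure convergence theory of~\cite[Sec.~1.3]{TL-SB:16}; your reconstruction of the standard argument (linearization as a global under-estimator, feasibility and descent by induction, KKT matching at fixed points) is exactly the content of that citation. Your explicit flagging of the constraint-qualification and boundedness hypotheses needed to invoke the reference is a reasonable and honest addition, not a deviation.
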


\begin{algorithm}[ht]
  \caption{Convex-concave procedure to solve (\eqref{new-problem-convex})}\label{alg:ccp}
  \begin{algorithmic}[1]
  \State \algorithmicgiven ~Initial feasible point $\alpha^0$, $k := 0$.
  \Repeat 
  \State Let $\bar{\mathcal{C}}_2(\alpha,\alpha^k) \triangleq \mathcal{C}_2(\alpha^k) + \nabla_\alpha \mathcal{C}_2(\alpha^k)^\top (\alpha - \alpha^k)$
  \Comment{Convexifying the constraint}
  \State Set $\alpha^{k+1}$ to be the solution of the convex problem
  \begin{align*}
   \min_{\alpha, r} \quad & \mathcal{C}_0(\alpha) \notag \\
   {\rm s.t.} \quad & \mathcal{C}_1(\alpha) - \bar{\mathcal{C}}_2(\alpha,\alpha^k) + \mathcal{A}_2(r) \leq 0  \\  
   & \mathcal{A}_1(\alpha,r) = 0
\end{align*}
 \State $k \gets k+1$
 \Comment{Update iteration}
 \Until convergence
  \end{algorithmic}
\end{algorithm}

\vspace*{-2ex}
\section{Simulation examples}\label{sec:4}

Here we illustrate the effectiveness of the proposed data-based approach in solving~\eqref{problem} and compare it against the model-based approaches in~\cite{SW-JL:17,YZ-JC:17-tcns} for bilinear systems. 

\begin{figure*}[ht]
    \centering
        \subfigure[Example \ref{example1}]{
        \centering
        \includegraphics[width=.315\linewidth]{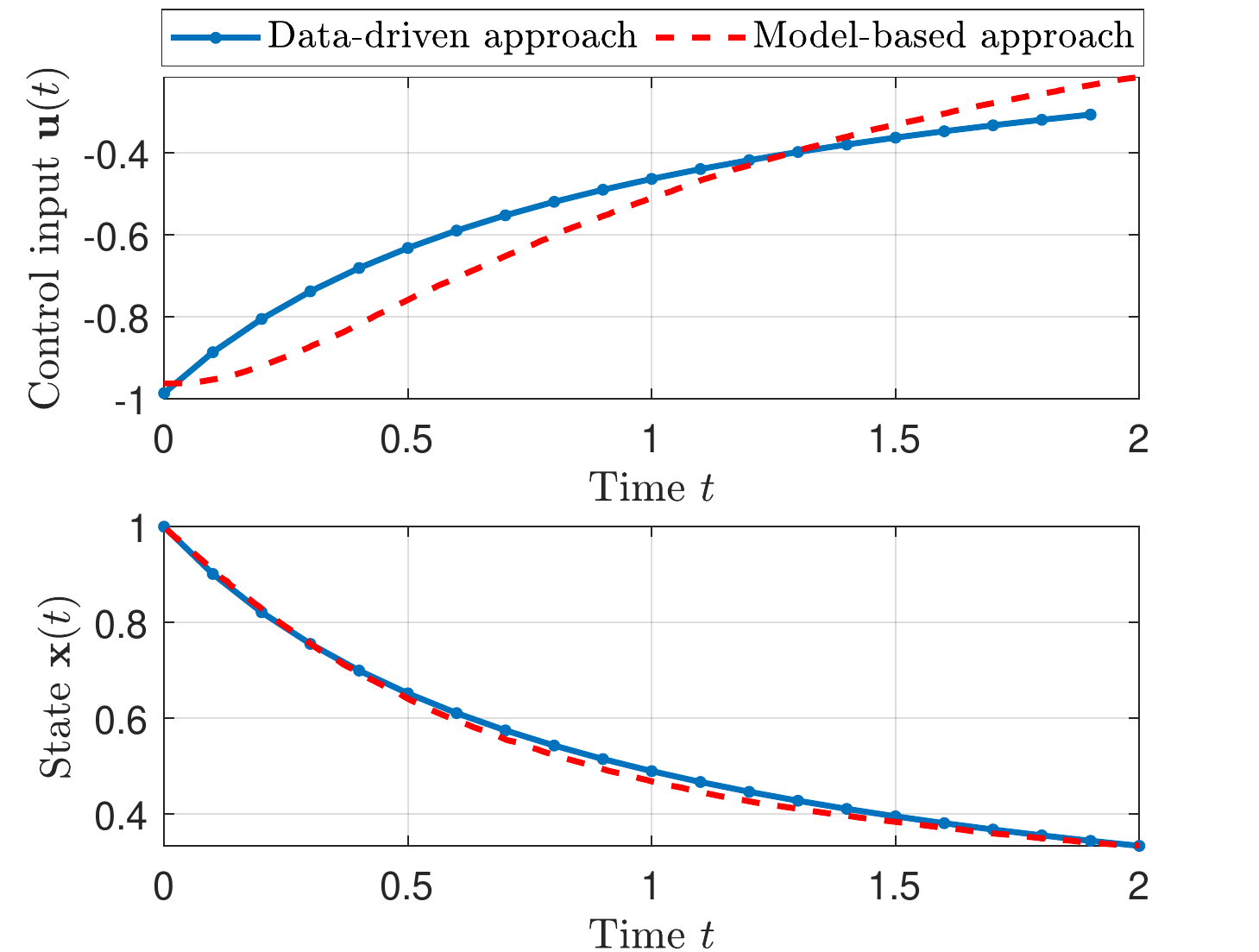}
     \label{fig1}}
    \subfigure[Example \ref{example2}]{
        \centering
        \includegraphics[width=.315\linewidth]{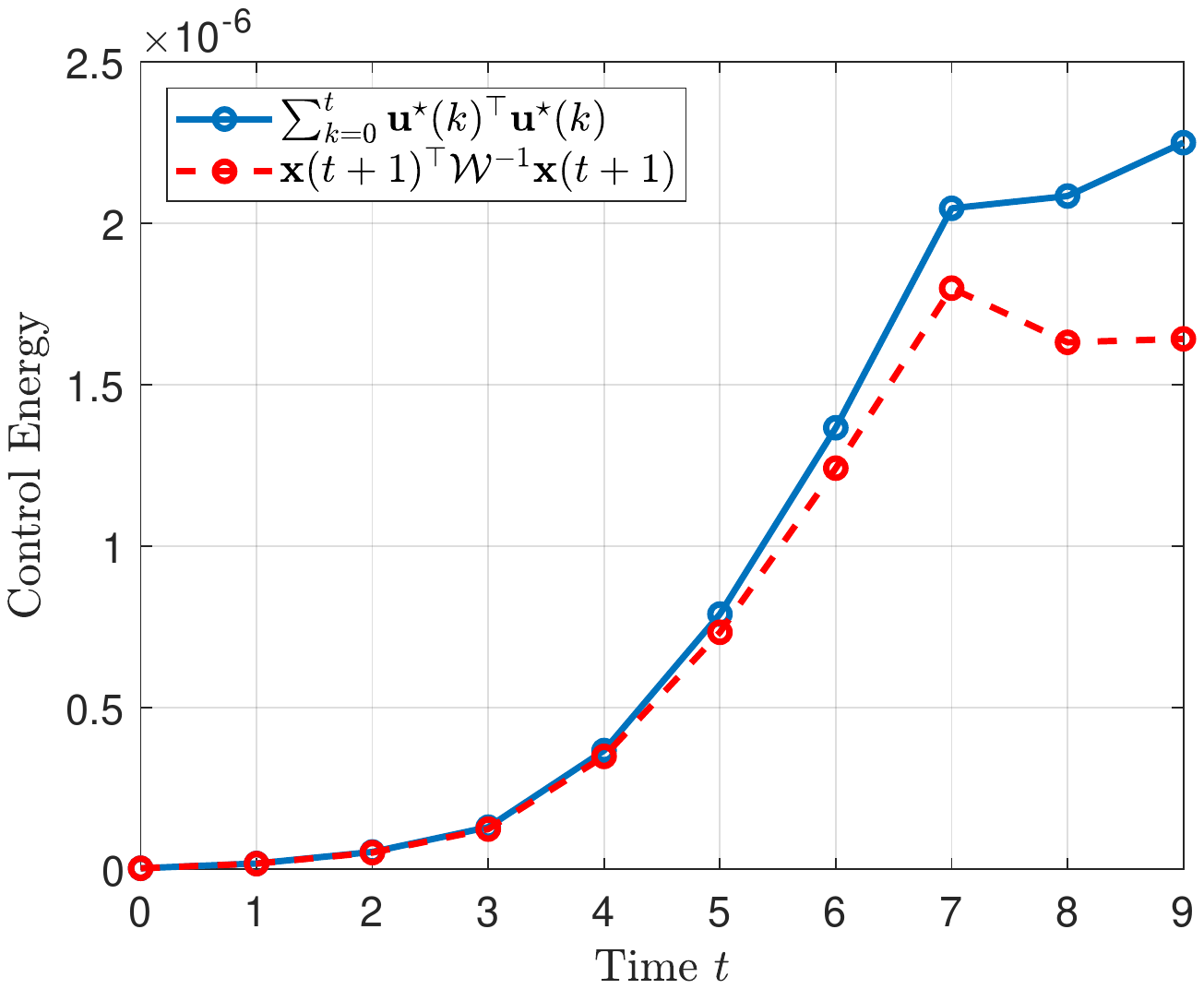}
       \label{fig2}}
           \subfigure[Example \ref{example3}]{
        \centering
        \includegraphics[width=.315\linewidth]{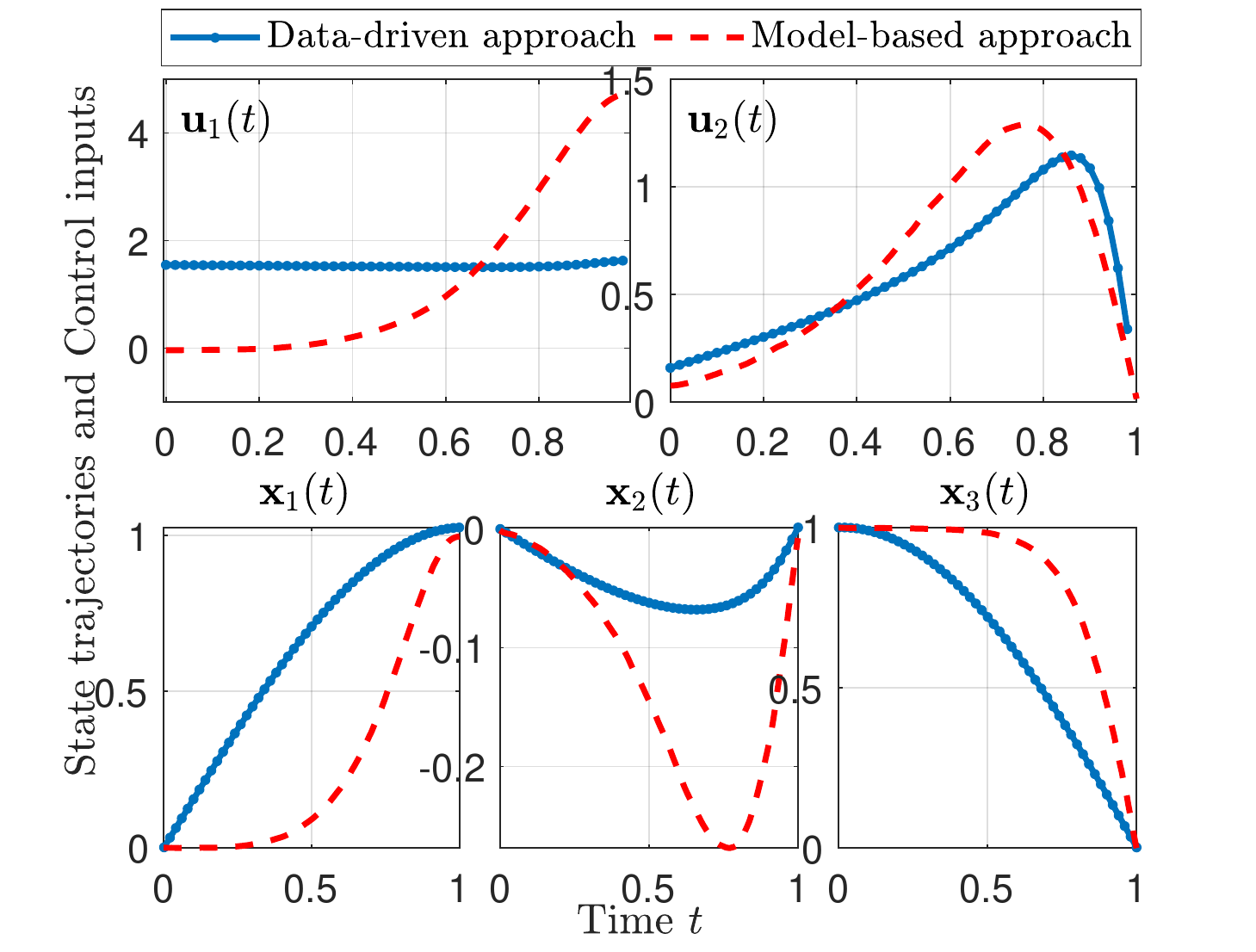}
        \label{fig3}}
\caption{Performance of the proposed data-driven approach (solid blue lines) versus model-based approaches (dashed red lines). 
The total cost values are (a) $0.1 \times \sum_{t = 0}^{19} \mathbf{x}^2(t) + \mathbf{u}^2(t) = 1.3346$ for the data-based approach and $ \int_{0}^{2} \mathbf{x}^2(\tau) + \mathbf{u}^2(\tau) d\tau = 1.3506$ for the model-based iterative method in~\cite{SW-JL:17}; (b) $\sum_{t=0}^{9}{\mathbf{u}(t)}^\top \mathbf{u}(t)= 2.25 \times 10^{-6}$ for the data-based approach and $\mathbf{x}(10)^\top\mathcal{W}^{-1}\mathbf{x}(10) = 1.64 \times 10^{-6}$ for the Gramian-based lower bound in~\cite{YZ-JC:17-tcns}; and (c) $0.02 \times \sum_{t = 0}^{49} \mathbf{u}^\top(t)\mathbf{u}(t) = 2.7999$ for the data-based approach and $\int_{0}^{1} \mathbf{u}^\top(\tau)\mathbf{u}(\tau) d\tau = 4.7976$ for the model-based iterative method in~\cite{SW-JL:17}.}
\vspace*{-2.5ex}
\end{figure*}

\begin{example}\longthmtitle{Population control}\label{example1}
  We consider a population control problem introduced in~\cite[Example
  1]{SW-JL:17} evolving in continuous time. For the horizon $T = 20$,
  we use a first-order Euler discretization with stepsize $0.1$. The
  resulting discrete-time bilinear system is
  $\mathbf{x}(t+1) = \mathbf{x}(t) + 0.1\mathbf{x}(t)\mathbf{u}(t)$.
  We take $\mathbf{Q} = \mathbf{R} = 1$ and consider
  $\mathbf{x_0} = 1$, $\mathbf{x_f} = \frac{1}{3}$. We perform a
  control experiment with $L = 60$ using randomly generated inputs,
  and verify that the resulting $\mathcal{G}_{20}(60)$ is full-row
  rank. Algorithm~\ref{alg:ccp} obtains a local optimum $\alpha^\star$
  of~\eqref{new-problem}. Fig.~\ref{fig1} shows the trajectories, both
  displaying similar performance, obtained from the data-based
  solution in Theorem~\ref{thm:equivalent} with that of the
  model-based iterative method~\cite{SW-JL:17}.  \hfill $\rhd$
\end{example}

\begin{example}\longthmtitle{Minimum-energy control problem}\label{example2}
  Consider the bilinear system from~\cite[Example 4.5]{YZ-JC:17-tcns},
  $ \mathbf{x}(t+1) = \mathbf{A} \mathbf{x}(t) + \mathbf{B}
  \mathbf{u}(t) + \mathbf{N}\mathbf{x}(t)\mathbf{u}(t)$, where
  $\mathbf{N} = \operatorname{diag}(0.1,0.2,0.3,0.4,0.5)$,
\begin{align*}
  \mathbf{A}
  = \left[
  \begin{array}{ccccc}
    0 & 0 & 0.024 & 0 & 0 \\
    1 & 0 & -0.26 & 0 & 0 \\
    0 & 1 & 0.9 & 0 & 0
    \\
    0 & 0 & 0.2 & 0 & -0.06
    \\
    0 & 0 & 0.15 & 1 & 0.5
  \end{array}
                       \right],
                       \; 
                       \mathbf{B} = \left[\begin{array}{l}
                                            0.8 \\
                                            0.6 \\
                                            0.4 \\
                                            0.2 \\
                                            0.5
                                          \end{array}\right].
\end{align*}
We consider the minimum-energy control problem
($\mathbf{Q} = \mathbf{0}$, $\mathbf{R} = I$) with $T = 10$. Let
$\mathbf{x_0} = \mathbf{0}$ and
$\mathbf{x_f} = \left[ 0.0004 \ -0.00038 \ 0.00318 \ 0.00062 \ 0.00219
\right]^\top$.  We perform a control experiment with $L = 74$ using
Algorithm~\ref{alg:experiments}. The execution of
  Algorithm~\ref{alg:experiments} here increases the row rank of
  $\mathcal{G}_T(t)$ monotonically for every $t \geq T$ until it
  becomes full-row rank (i.e., the algorithm never falls into Step
  11). We solve \eqref{new-problem} using
Algorithm~\ref{alg:ccp}. For comparison, we use the Gramian-based
lower bound of the optimal cost value obtained in\cite{YZ-JC:17-tcns},
\begin{align*}
  \sum_{t=0}^{T-1} {\mathbf{u}^\star}^\top(t) \mathbf{u}^\star(t) \geq \mathbf{x}^\top(T) \mathcal{W}^{-1} \mathbf{x}(T),  
\end{align*}
where $\mathcal{W}$ is the reachability Gramian of the bilinear system. Fig.~\ref{fig2} compares this lower bound with the values obtained with the trajectories from the data-based solution in Theorem~\ref{thm:equivalent}, showing a close agreement between the two. \hfill$\rhd$
\end{example}

\begin{example}\longthmtitle{Minimum-energy control problem}\label{example3}
  We consider a minimum-energy control example from~\cite[Example
  2]{SW-JL:17}, for which we use a first-order Euler discretization
  with stepsize $0.02$. The discrete-time bilinear system is
  $ \mathbf{x}(t+1) = \mathbf{A}\mathbf{x}(t) +
  \mathbf{B}\mathbf{u}(t) + \Big[ \sum_{j=1}^{3}\mathbf{x}_j(t)
  \mathbf{N}_j \Big] \mathbf{u}(t)$, with
\begin{align*}
	&\mathbf{A} = \left[ \begin{array}{ccc}
		1 & -0.01 & 0 \\
		0.01 & 1 & 0\\
		0 & 0 & 1
	\end{array} \right], \mathbf{B} = \mathbf{0},  \mathbf{N}_1 = \left[ \begin{array}{cc}
		0 & 0 \\
		0 & 0 \\
		-0.02 & 0
	\end{array} \right],\\
	&\mathbf{N}_2 = \left[ \begin{array}{cc}
		0 & 0 \\
		0 & 0 \\
		0 & 0.02
	\end{array} \right],  \mathbf{N}_3 = \left[ \begin{array}{cc}
		0.02 & 0 \\
		0 & -0.02 \\
		0 & 0
	\end{array} \right].
\end{align*}
We consider $T = 50$ and perform a control experiment with $L = 452$
randomly generated inputs,
and verify $\mathcal{G}_{50}(452)$ is full-row rank. We let
$\mathbf{x_0} = \left[ 0 \ 0 \ 1 \right]^\top$,
$\mathbf{x_f} = \left[ 1 \ 0 \ 0 \right]^\top$. We solve
\eqref{new-problem} using Algorithm~\ref{alg:ccp} to obtain
$\alpha^\star$ and compare, cf. Fig.~\ref{fig3}, the trajectories
obtained from the data-based solution in Theorem~\ref{thm:equivalent}
with that of the model-based iterative method~\cite{SW-JL:17}, showing
a better local optimum by the former.~$\rhd$
\end{example}

\vspace*{-2ex}
\section{Conclusions}\label{sec:conclusions}
We have presented a data-driven method to learn optimal controls of
bilinear systems directly from input/state data without a priori
knowledge of the matrices. We have provided an online control
experiment design method to obtain sufficiently informative data and
introduced an equivalent data-based reformulation of the original
nonconvex optimal control problem and employed an iterative
convex-concave algorithmic procedure to solve it. Simulations show
data-based optimal control trajectories have comparable performance to
those obtained by model-based ones.  Future work will explore
extensions to noisy data and robustness analysis, weaker notions under
which data is sufficient to reconstruct optimal controls, online
implementations of the convex-concave procedure as data becomes
increasingly available, and distributed implementations for
large-scale bilinear networks.

\vspace*{-1ex}


\begin{thebibliography}{10}

\bibitem{ZH-ZW:13}
Z.~Hou and Z.~Wang, ``From model-based control to data-driven control: Survey,
  classification and perspective,'' {\em Information Sciences}, vol.~235,
  pp.~3--35, 2013.

\bibitem{VK-FP:21}
V.~Krishnan and F.~Pasqualetti, ``On direct vs indirect data-driven predictive
  control,'' in {\em {IEEE} Conf.\ on Decision and Control}, (Austin, TX),
  pp.~736--741, 2021.

\bibitem{FD-JC-IM:21}
F.~D{\"o}rfler, J.~Coulson, and I.~Markovsky, ``Bridging direct \& indirect
  data-driven control formulations via regularizations and relaxations,'' {\em
  IEEE Transactions on Automatic Control}, 2022.
\newblock To appear.

\bibitem{JCW-PR-IM-BLMDM:05}
J.~C. Willems, P.~Rapisarda, I.~Markovsky, and B.~L.~M. {De Moor}, ``A note on
  persistency of excitation,'' {\em Systems \& Control Letters}, vol.~54,
  no.~4, pp.~325--329, 2005.

\bibitem{CDP-PT:19}
C.~{De Persis} and P.~Tesi, ``Formulas for data-driven control: Stabilization,
  optimality and robustness,'' {\em IEEE Transactions on Automatic Control},
  vol.~65, no.~3, pp.~909--924, 2019.

\bibitem{HJVW-JE-HLT-MKC:20}
H.~J. van Waarde, J.~Eising, H.~L. Trentelman, and M.~K. Camlibel, ``Data
  informativity: a new perspective on data-driven analysis and control,'' {\em
  IEEE Transactions on Automatic Control}, vol.~65, no.~11, pp.~4753--4768,
  2020.

\bibitem{GRGBDS-SA-CL-LC:18}
G.~R. {Gon{\c{c}}alves da Silva}, A.~S. Bazanella, C.~Lorenzini, and
  L.~Campestrini, ``Data-driven {LQR} control design,'' {\em IEEE Control
  Systems Letters}, vol.~3, no.~1, pp.~180--185, 2018.

\bibitem{GB-VK-FP:19}
G.~{Baggio}, V.~{Katewa}, and F.~{Pasqualetti}, ``{Data-driven minimum-energy
  controls for linear systems},'' {\em IEEE Control Systems Letters}, vol.~3,
  no.~3, pp.~589--594, 2019.

\bibitem{GB-FP:20}
G.~Baggio and F.~Pasqualetti, ``Learning minimum-energy controls from
  heterogeneous data,'' in {\em {A}merican {C}ontrol {C}onference}, (Denver,
  CO), pp.~3991--3996, 2020.

\bibitem{JC-JL-FD:19}
J.~{Coulson}, J.~{Lygeros}, and F.~{D\"orfler}, ``Data-enabled predictive
  control: in the shallows of the {DeePC},'' in {\em {E}uropean {C}ontrol
  {C}onference}, (Naples, Italy), pp.~307--312, 2019.

\bibitem{AA-JC:21-csl}
A.~Allibhoy and J.~Cort\'es, ``Data-based receding horizon control of linear
  network systems,'' {\em IEEE Control Systems Letters}, vol.~5, no.~4,
  pp.~1207--1212, 2021.

\bibitem{GB-DSB-FP:21}
G.~Baggio, D.~S. Bassett, and F.~Pasqualetti, ``Data-driven control of complex
  networks,'' {\em Nature Communications}, vol.~12, no.~1, pp.~1--13, 2021.

\bibitem{HJVW:21}
H.~J. van Waarde, ``Beyond persistent excitation: Online experiment design for
  data-driven modeling and control,'' {\em IEEE Control Systems Letters},
  vol.~6, pp.~319--324, 2022.

\bibitem{GB-MV-JC-EDA:21-tac}
G.~Bianchin, M.~Vaquero, J.~Cort\'{e}s, and E.~Dall'Anese, ``Online stochastic
  optimization of unknown linear dynamical systems: data-driven controller
  synthesis and analysis,'' {\em IEEE Transactions on Automatic Control}, 2021.
\newblock Submitted.

\bibitem{AB-CDP-PT:20}
A.~Bisoffi, C.~{De Persis}, and P.~Tesi, ``Data-based stabilization of unknown
  bilinear systems with guaranteed basin of attraction,'' {\em Systems \&
  Control Letters}, vol.~145, p.~104788, 2020.

\bibitem{CDP-PT:21}
C.~De~Persis and P.~Tesi, ``Designing experiments for data-driven control of
  nonlinear systems,'' {\em IFAC-PapersOnLine}, vol.~54, no.~9, pp.~285--290,
  2021.

\bibitem{JB-FA:20}
J.~Berberich and F.~Allg{\"o}wer, ``A trajectory-based framework for
  data-driven system analysis and control,'' in {\em {E}uropean {C}ontrol
  {C}onference}, (St. Petersburg, Russia), pp.~1365--1370, 2020.

\bibitem{MA-JB-VGL-FA-MAM:21}
M.~Alsalti, J.~Berberich, V.~G. Lopez, F.~Allg\"ower, and M.~A. M\"uller,
  ``Data-based system analysis and control of flat nonlinear systems,'' in {\em
  {IEEE} Conf.\ on Decision and Control}, (Austin, TX),
pp.~1484--1489, 2021.

\bibitem{JGRE-JS:20}
J.~G. Rueda-Escobedo and J.~Schiffer, ``Data-driven internal model control of
  second-order discrete {V}olterra systems,'' in {\em {IEEE} Conf.\ on Decision
  and Control}, (Jeju, South Korea), pp.~4572--4579, 2020.

\bibitem{CV-RT-SH-AK:21}
C.~Verhoek, R.~T{\'o}th, S.~Haesaert, and A.~Koch, ``Fundamental lemma for
  data-driven analysis of linear parameter-varying systems,'' in {\em {IEEE}
  Conf.\ on Decision and Control}, (Austin, TX), pp.~5033--5039, 2021.

\bibitem{CB-GD-GK:74}
C.~Bruni, G.~Dipillo, and G.~Koch, ``Bilinear systems: An appealing class of
  "nearly linear" systems in theory and applications,'' {\em IEEE Transactions
  on Automatic Control}, vol.~19, no.~4, pp.~334--348, 1974.

\bibitem{DG-DAP:21}
D.~Goswami and D.~A. Paley, ``Bilinearization, reachability, and optimal
  control of control-affine nonlinear systems: A {K}oopman spectral approach,''
  {\em IEEE Transactions on Automatic Control}, 2021.
\newblock To appear.

\bibitem{ZA-ZG:94}
Z.~Aganovic and Z.~Gajic, ``The successive approximation procedure for
  finite-time optimal control of bilinear systems,'' {\em IEEE Transactions on
  Automatic Control}, vol.~39, no.~9, pp.~1932--1935, 1994.

\bibitem{SW-JL:17}
S.~Wang and J.-S. Li, ``Fixed-endpoint optimal control of bilinear ensemble
  systems,'' {\em SIAM Journal on Control and Optimization}, vol.~55, no.~5,
  pp.~3039--3065, 2017.

\bibitem{SW-JL:18}
S.~Wang and J.-S. Li, ``Free-endpoint optimal control of inhomogeneous bilinear
  ensemble systems,'' {\em Automatica}, vol.~95, pp.~306--315, 2018.

\bibitem{YZ-JC:17-tcns}
Y.~Zhao and J.~Cort{\'e}s, ``Gramian-based reachability metrics for bilinear
  networks,'' {\em IEEE Transactions on Control of Network Systems}, vol.~4,
  no.~3, pp.~620--631, 2017.

\bibitem{HJVW-CDP-MKC-PT:20}
H.~J. van Waarde, C.~{De Persis}, M.~K. Camlibel, and P.~Tesi, ``Willems’
  fundamental lemma for state-space systems and its extension to multiple
  datasets,'' {\em IEEE Control Systems Letters}, vol.~4, no.~3, pp.~602--607,
  2020.

\bibitem{TL-SB:16}
T.~Lipp and S.~Boyd, ``Variations and extension of the convex--concave
  procedure,'' {\em Optimization and Engineering}, vol.~17, no.~2,
  pp.~263--287, 2016.

\bibitem{SM-JS:91}
S.~Mehrotra and J.~Sun, ``A method of analytic centers for quadratically
  constrained convex quadratic programs,'' {\em SIAM Journal on Numerical
  Analysis}, vol.~28, no.~2, pp.~529--544, 1991.

\end{thebibliography}
\end{document}